\theoremstyle{remark}{
\newtheorem{Def}{{\rm Definition}}
\newtheorem{Ex}{{\rm Example}}
\newtheorem{Rem}{{\rm Remark}}
\newtheorem{Prob}{{\rm Problem}}
\newtheorem*{MainProb}{Main Problem}
}
\theoremstyle{plain}{

\newtheorem{Prop}{Proposition}
\newtheorem{Thm}{Theorem}
\newtheorem{MainThm}{Main Theorem}

\newtheorem{Fact}{Fact}
}
\begin{document}
\title[Global properties of Reeb spaces with non-zero top homology groups]{
Global topologies of Reeb spaces of stable fold maps with non-trivial top homology groups}
\author{Naoki Kitazawa}
\keywords{Reeb spaces. Morse functions and fold maps. (Co)homology. Polyhedra. \\
\indent {\it \textup{2020} Mathematics Subject Classification}: Primary~57R45. Secondary~57R19.}
\address{Institute of Mathematics for Industry, Kyushu University, 744 Motooka, Nishi-ku Fukuoka 819-0395, Japan\\
 TEL (Office): +81-92-802-4402 \\
 FAX (Office): +81-92-802-4405 \\
}
\email{n-kitazawa@imi.kyushu-u.ac.jp}
\urladdr{https://naokikitazawa.github.io/NaokiKitazawa.html}
\maketitle
\begin{abstract}
The {\it Reeb space} of a continuous map is the space of all (elements representing) connected components of preimages endowed with the quotient topology induced from the natural equivalence relation on the domain.
These objects are strong tools in (differential) topological theory of Morse functions, {\it fold} maps, which are their higher dimensional variants, and so on: they are in general polyhedra whose dimensions are same as those of the targets.
In suitable cases Reeb spaces inherit topological information such as homology groups, cohomology rings, and so on, of the manifolds. 

This presents the following problem: what are global topologies of Reeb spaces of these smooth maps of suitable classes like? The present paper presents families of stable fold maps having Reeb spaces with non-trivial top homology groups with
their (co)homology groups (and rings).
Related studies on the global topologies from the viewpoints of the singularity theory of differentiable maps and differential topology have been presented by various researchers including the author. The author previously constructed families of fold maps with Reeb spaces with non-trivial top homology groups and with good topological properties. This paper presents new families, especially, generalized situations of some known situations.



\end{abstract}


\maketitle
\section{Introduction.}
\label{sec:1}

For a continuous map $c:X \rightarrow Y$, we can define
the equivalence relation ${\sim}_c$ on $X$ so that $x_1 {\sim}_c x_2$ if and only if they are in a same connected component of a preimage. We can define the {\it Reeb space} of $c$ as follows.

\begin{Def}
In the situation above, the quotient space $W_c:=X/{\sim}_c$ is the Reeb space of $c$.
\end{Def}

$q_f:X \rightarrow W_c$ denotes the quotient map and $\bar{c}$ denotes the map uniquely obtained so that $c=\bar{c} \circ q_c$.

Reeb spaces are strong tools in (differential) topological theory of Morse functions, {\it fold} maps, which are their higher dimensional variants, and so on: they are in general polyhedra whose dimensions are same as those of the targets.
In suitable cases Reeb spaces inherit topological information such as homology groups, cohomology rings, and so on, of the manifolds. They play important roles in applied mathematics or applications of mathematical science such as data analysis and visualizations. \cite{sakuraisaekicarrwuyamamotoduketakahashi} is a related article.

Hereafter, $p \in X$ is said to be a {\it singular} point if for a differentiable map $c:X \rightarrow Y$ the rank of the differential ${dc}_p$ there is smaller than both the dimensions of the domain and the target. The {\it singular set} of a differentiable map $c$ is the set of all singular points, denoted by $S(c)$. The {\it singular value set} of $c$ means the image of the singular set. A {\it singular value} of $c$ is a point in the singular value set. The {\it regular value set} of $c$ means the complementary set of the singular value set. A {\it  regular value} of $c$ is a point in the regular value set.

For Morse functions and smooth functions which are not so wild, the Reeb spaces are graphs where the vertex sets are the sets of all (elements representing) connected components of preimages containing at least one {\it singular} point: see \cite{saeki4} for example. Reeb spaces are in such cases {\it Reeb graphs}.

We present simplest Morse functions and fold maps and their Reeb spaces. {\it Fold} maps are reviewed in the next section.

\begin{Ex}
For Morse functions on closed manifolds with exactly two singular points, if the dimensions of the manifolds are greater than $1$, then the Reeb spaces are graphs with exactly two vertices and one edge.

Canonical projections of unit spheres are simplest examples of fold maps and {\it special generic} maps: we review the class of special generic maps as an important subclass in the next section. 
For such a map, the singular set is an equator. The images are unit discs.
   
These maps are higher dimensional versions of Morse functions on unit spheres obtained by considering the natural heights: these functions are simplest ones in the functions before.

If the dimensions of the manifolds are greater than the dimensions of the targets, then the Reeb spaces are unit discs of the targets.

For a copy of the 2-dimensional torus embedded naturally in the $3$-dimensional Euclidean space, the Morse function obtained by the natural height has exactly four singular points and the Reeb graph is as follows.

\begin{enumerate}
\item The vertex set consists of exactly four vertices. For exactly two vertices, the degrees are $1$ and for the remaining vertices, the degrees are $3$.
\item The edge set consists of exactly four edges.
\item The $j$-th integral homology group of the Reeb space is isomorphic to $\mathbb{Z}$ for $j=0,1$. 
\end{enumerate} 
\end{Ex}

We present two problems on Reeb spaces.

\begin{Prob}

For a given graph, can we construct a good smooth function of a given suitable class?

\end{Prob}

\cite{sharko} is a pioneering study. 
\cite{masumotosaeki}, \cite{michalak}, \cite{saeki4}, and so on, are important works and there exist other closely related works. \cite{kitazawa8}, \cite{kitazawa10} and \cite{kitazawa11} are related works by the author. 

The following problem is related to the present study more.

\begin{Prob}

Construct good Morse functions, fold maps, and more general good smooth maps and investigate the topologies of their Reeb spaces?

\end{Prob}

Related to this, \cite{kobayashisaeki} concentrates on Reeb spaces of so-called {\it stable} maps into the plane on closed smooth manifolds whose dimensions are greater than $2$. It is shown that for $2$-dimensional polyhedra whose homology groups are isomorphic to that of a disc under the situation that the coefficient is isomorphic to $\mathbb{Z}/2\mathbb{Z}$ and having suitable  local properties can be Reeb spaces of stable maps of suitable classes. {\it Stable} maps are explained in the next section as ones forming an important subclass of smooth maps.

The author have studied explicit topologies of Reeb spaces of explicit fold maps via construction of the maps first in \cite{kitazawa} and later in \cite{kitazawa7}, \cite{kitazawa9}, \cite{kitazawa12}, \cite{kitazawa14}, \cite{kitazawa15}, and so on: for closely related studies on manifolds admitting such maps see \cite{kitazawa13} and \cite{kitazawa16} for example. Such studies are important and difficult and are also different from systematic theory of knowing existence of such maps in \cite{eliashberg}, \cite{eliashberg2}, and so on. Their homology groups and cohomology rings are various. Moreover, for example, top-homology groups are not trivial and free. As another important remark, these maps and Reeb spaces are obtained via very explicit and simple operations changing maps and manifolds locally starting from very fundamental fold maps such as canonical projections of unit spheres. 

The present paper concentrates on the following problem.

\begin{MainProb}

Construct explicit {\rm (}families of{\rm )} good Morse functions, fold maps, and more general smooth maps satisfying suitable differential topological properties and presenting Reeb spaces which do not collapse to lower dimensional polyhedra. Investigate their topologies.

\end{MainProb}

Main Theorems are as follows, leaving expositions on several undefined notions and notation later.

The following is of a new type.

\begin{MainThm}[Theorem \ref{thm:3}.]
Let $l \geq 0$ and $m>n \geq 1$ be integers and $\{X_j\}_{j=1}^l$ be a family of finitely many $n$-dimensional closed, connected and stably parallelizable manifolds Let $\{(F_{j,1},F_{j,2})\}_{j=1}^l$ be a family of pairs of {\rm (}$m-n${\rm )}-dimensional closed, connected and smooth manifolds for each of which a disjoint union of the two manifolds bounds a compact, connected and smooth manifold $F_j$ obtained by attaching {\rm handles} containing at most $1$ {\rm $1$-handle} to $(F_{j,1} \sqcup F_{j,2}) \times (\{1\} \subset [0,1])$. Let $X$ be an $n$-dimensional connected and compact manifold we can smoothly immerse into ${\mathbb{R}}^n$. Then we have a simple fold map $f$ on an $m$-dimensional closed and connected manifold $M$ into ${\mathbb{R}}^n$ satisfying the following two.
\begin{enumerate}
\item The Reeb space $W_f$ is a {\rm branched manifold} of {\rm the class A} simple homotopy equivalent to a space obtained by a finite iteration of taking a bouquet starting from $l+1$ polyhedra where the $j$-th polyhedron collapses to $X_j$ for $1 \leq j \leq l$ and is $X$ for $j=l+1$.
\item There exist connected components of the preimage of a regular value diffeomorphic to $F_{j,1}$ and $F_{j,2}$ for each $j$.
\end{enumerate}
\end{MainThm}

The following two generalize situations of ones of existing results, presented as Theorems \ref{thm:1} and \ref{thm:2}, abstractly.
\begin{MainThm}[Theorem \ref{thm:4}.]
Let $l \geq 0$ and $m>n \geq 1$ be integers.
Let $\{(F_{j,1},F_{j,2})\}_{j=1}^l$ be a family of pairs of {\rm (}$m-n${\rm )}-dimensional closed, connected and smooth manifolds for each of which a disjoint union of the two manifolds bounds a compact, connected and smooth manifold $F_j$ obtained by attaching handles containing at most $1$ $1$-handle to $(F_{j,1} \sqcup F_{j,2}) \times (\{0\} \subset [0,1])$. Let $X$ be an $n$-dimensional connected and compact manifold we can smoothly immerse into ${\mathbb{R}}^n$. Let $\{Y_j \subset X\}_{j=1}^l$ be a family of mutually disjoint $n$-dimensional compact, connected and smooth submanifolds of $X$. Then we have a simple fold map $f$ on an $m$-dimensional closed and connected manifold $M$ into ${\mathbb{R}}^n$ such that the Reeb space $W_f$ is a branched manifold of the class A obtained by attaching $l$ manifolds each of which is diffeomorphic to a double $DY_j$ of $Y_j$ identifying $Y_j$ in $X$ and $Y_j \subset DY_j$ canonically and that there exist connected components of the preimage of a regular value diffeomorphic to $F_{j,1}$ and $F_{j,2}$ for each $j$.
\end{MainThm}
\begin{MainThm}[Theorem \ref{thm:5}.]
In Theorem \ref{thm:4}, assume that $l>0$, that $X$ is obtained by attaching $h_p$ $p$-handles for $1 \leq p \leq n-1$ starting from $l$ $0$-handles and that the following four hold.
\begin{enumerate}
\item $H_1(X;\mathbb{Z}) \cong {\mathbb{Z}}^{h_1-(l-1)}$.
\item $H_p(X;\mathbb{Z}) \cong {\mathbb{Z}}^{h_p}$ for $2 \leq p \leq n-1$.
\item $F_j$ is obtained by attaching $h_{j,p}$ $p$-handles for $1 \leq p \leq n-1$ starting from a $0$-handle.
\item $X-{\sqcup}_{j=1}^l {\rm Int} Y_j$ is obtained by attaching $h_p-{\Sigma}_{j=1}^{l} h_{j,p}$ $p$-handles to the product of a manifold diffeomorphic to the boundary and $\{0\} \subset [0,1]$ for all $1 \leq p \leq n-1$.
\end{enumerate}
Then, we have the following facts on homology groups and cohomology groups and rings.
\begin{enumerate}
\item $H_p(W_f;\mathbb{Z})$ is isomorphic to the direct sum of $H_p(X;\mathbb{Z}) \oplus {\mathbb{Z}}^{{\Sigma}_{j=1}^l h_{j,n-p}}$ for $1 \leq p \leq n-1$ and ${\mathbb{Z}}^l$ for $p=n$.
\item $H^p(W_f;\mathbb{Z})$ is isomorphic to $A_p \oplus {\oplus}_{j=1}^l (H^{p}(Y_j;\mathbb{Z})) \oplus {\oplus}_{j=1}^l (H^{p}(DY_j-{\rm Int} Y_j;\mathbb{Z}))$ where $A_p$ is isomorphic to ${\mathbb{Z}}^{h_1-{\Sigma}_{j=1}^{l} h_{j,1}-(l-1)}$ for $p=1$ and ${\mathbb{Z}}^{h_p-{\Sigma}_{j=1}^{l} h_{j,p}}$ for $2 \leq p \leq n-1$. $H^n(W_f;\mathbb{Z})$ is isomorphic to ${\mathbb{Z}}^l$.
\item $H^{\ast}(X;\mathbb{Z})$ is regarded as a subalgebra of $H^{\ast}(W_f;\mathbb{Z})$ and the cohomology group is identified with $A_p \oplus {\oplus}_{j=1}^l (H^{p}(Y_j;\mathbb{Z})) \oplus \{0\} \subset A_p \oplus {\oplus}_{j=1}^l (H^{p}(Y_j;\mathbb{Z})) \oplus {\oplus}_{j=1}^l (H^{p}(DY_j-{\rm Int} Y_j;\mathbb{Z}))$ before for each degree $1 \leq p \leq n-1$. $H^{\ast}({\sqcup} DY_j;\mathbb{Z})$ is regarded as a subalgebra of $H^{\ast}(W_f;\mathbb{Z})$ and the cohomology group is identified with $\{0\} \oplus {\oplus}_{j=1}^l (H^{p}(Y_j;\mathbb{Z})) \oplus {\oplus}_{j=1}^l (H^{p}(DY_j-{\rm Int} Y_j;\mathbb{Z})) \subset A_p \oplus {\oplus}_{j=1}^l (H^{p}(Y_j;\mathbb{Z})) \oplus {\oplus}_{j=1}^l (H^{p}(DY_j-{\rm Int} Y_j;\mathbb{Z}))$ for each degree $1\leq p \leq n-1$ and ${\oplus}_{j=1}^l (H^{n}(DY_j;\mathbb{Z})) \cong {\mathbb{Z}}^l$ for degree $p=n$.
\item For $1 \leq p_1,p_2 \leq n-1$, the product of an element of degree $p_1$, identified with an element of $A_{p_1} \oplus \{0\} \subset A_{p_1} \oplus {\oplus}_{j=1}^l (H^{p_1}(Y_j;\mathbb{Z})) \oplus {\oplus}_{j=1}^l (H^{p_1}(DY_j-{\rm Int} Y_j;\mathbb{Z}))$ and an element of degree $p_2$, identified with an element of $\{0\} \oplus {\oplus}_{j=1}^l (H^{p_2}(DY_j-{\rm Int} Y_j;\mathbb{Z})) \subset A_{p_2} \oplus {\oplus}_{j=1}^l (H^{p_2}(Y_j;\mathbb{Z})) \oplus {\oplus}_{j=1}^l (H^{p_2}(DY_j-{\rm Int} Y_j;\mathbb{Z}))$, vanishes in $H^{\ast}(W_f;\mathbb{Z})$.
\end{enumerate}
\end{MainThm}

In the next section, we review the definition and fundamental properties of {\it fold} maps and their Reeb spaces. We also review {\it special generic} maps as an important subclass and their Reeb spaces, which are compact and smoothly immersed manifolds in the Euclidean spaces. The class of {\it simple} fold maps extends the class in a natural way. The Reeb spaces of such maps are so-called {\it branched} manifolds. We also refer to {\it stable} maps as an important subclass of smooth maps as a kind of appendices.
The third section is devoted to Main Theorems with supporting existing studies.

Hereafter, manifolds, maps between manifolds, (boundary) connected sums, and so on, are considered in the smooth category (or of the class $C^{\infty}$), unless otherwise stated. 
However, we also discuss in the PL, or as an equivalent category, the piecewise smooth category, For example, we consider PL bundles and smooth bundles as bundles whose fibers are smooth or PL manifolds or general polyhedra.

\section{(Stable) fold maps and their Reeb spaces.}
\begin{Def}
\label{def:2}
Let $m \geq n \geq 1$ be integers. Let $M$ be an $m$-dimensional closed manifold and $N$ be an $n$-dimensional manifold with no boundary. A smooth map $f:M \rightarrow N$ is said to be a {\it fold} map if at each singular point $p$ $f$ is represented as
$$(x_1,\cdots,x_m) \rightarrow (x_1,\cdots,x_{n-1},{\Sigma}_{j=n}^{m-i(p)} {x_j}^2-{\Sigma}_{m-i(p)+1}^{m} {x_j}^2)$$
 via suitable coordinates and a suitable integer $0 \leq i(p) \leq \frac{m-n+1}{2}$.
\end{Def}
For systematic studies and advanced expositions on fold maps, see \cite{golubitskyguillemin} and \cite{saeki} for example.
Morse functions are for specific cases or $n=1$ with $N=\mathbb{R}$. \cite{milnor} and \cite{milnor2} present fundamental information on Morse functions and as a related notion and technique, {\it $j$-handle} and an handle attachment in \cite{milnor2} and so on.

\begin{Prop}
\label{prop:1}
In Definition \ref{def:2}, for a fold map $f$, $i(p)$ is unique{\rm :} we call the integer the {\rm index} of $p$.
The restrictions to the singular set $S(f)$ and the set of all singular points of a fixed index, which are proven to be {\rm (}$n-1${\rm )}-dimensional closed and smooth submanifolds with no boundaries, are immersions.
\end{Prop}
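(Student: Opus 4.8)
\emph{Proof proposal.}
The plan is to work entirely in the local normal form of Definition \ref{def:2} and to supplement it with two standard facts: Sylvester's law of inertia, and the observation that a subset of a manifold which is, near each of its points, a coordinate subspace in some chart is automatically a smooth submanifold.

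First I would establish the uniqueness of $i(p)$. Given a singular point $p$, the differential $df_p$ has rank exactly $n-1$: the normal form exhibits the first $n-1$ component functions as independent coordinates, so the rank is at least $n-1$, while singularity of $p$ forces it to be less than $n$. Hence $K_p := \ker df_p \subset T_pM$ has dimension $m-n+1$ and $C_p := \mathrm{coker}\, df_p$ has dimension $1$. On $K_p$ there is a well-defined intrinsic second derivative (Hessian) $H_p \colon K_p \times K_p \to C_p$, a symmetric bilinear form with values in the $1$-dimensional space $C_p$, independent of all choices of coordinates. Computing $H_p$ in the coordinates of Definition \ref{def:2} shows that, after choosing a generator of $C_p$, it is represented by the nondegenerate quadratic form with $m-n+1-i(p)$ positive squares and $i(p)$ negative squares. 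By Sylvester's law of inertia the unordered pair consisting of these two numbers is an invariant of $H_p$, changing only by interchanging its two entries when the generator of $C_p$ is replaced by its negative; hence the unordered pair $\{i(p), m-n+1-i(p)\}$ depends only on $f$ and $p$. Since the normalization $0 \le i(p) \le \frac{m-n+1}{2}$ singles out the smaller element of this pair, $i(p)$ is unique.

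Next I would read off the structure of $S(f)$ from the normal form. In the coordinates of Definition \ref{def:2} centered at a fold point $p$ of index $i$, a direct inspection of $df$ shows that the rank drops below $n$ exactly on the coordinate subspace $\{x_n = \dots = x_m = 0\}$; thus a neighborhood of $p$ in $S(f)$ is this $(n-1)$-dimensional smooth coordinate subspace. Since $f$ is a fold map, every point of $S(f)$ is of this type, so $S(f)$ is, near each of its points, an $(n-1)$-dimensional submanifold; as it is also a closed subset of the closed manifold $M$, it is an $(n-1)$-dimensional closed submanifold with empty boundary. Moreover, along this local slice the Hessian $H_p$ computed above has constant signature, so every singular point near $p$ has index $i$; hence the index is locally constant on $S(f)$, and the set of singular points of a fixed index $i$ is a union of connected components of $S(f)$, again an $(n-1)$-dimensional closed submanifold with no boundary.

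Finally, the immersion statements are immediate from the same chart: there the restriction of $f$ to $S(f)$ is $(x_1,\dots,x_{n-1}) \mapsto (x_1,\dots,x_{n-1},0)$, which has injective differential at every point, and likewise for the restriction to the set of singular points of a fixed index. Hence both restrictions are immersions. The only genuinely non-formal ingredient is the coordinate-free description of the index through the intrinsic Hessian $H_p$, needed for the uniqueness claim; once that and Sylvester's law are in place, everything else is a direct examination of the normal form, so I expect that to be the main (and only mild) obstacle.
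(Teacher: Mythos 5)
Your argument is correct: the intrinsic Hessian on $\ker df_p$ with values in the one-dimensional cokernel, combined with Sylvester's law of inertia and the normalization $0 \le i(p) \le \frac{m-n+1}{2}$, gives the uniqueness of the index, and the remaining claims (local structure of $S(f)$, local constancy of the index, and the immersion property of $f{\mid}_{S(f)}$) do follow by direct inspection of the normal form exactly as you describe. Note that the paper itself offers no proof of Proposition \ref{prop:1}; it is stated as a known fact with references to \cite{golubitskyguillemin} and \cite{saeki}, and your proof is essentially the standard argument found there, so there is nothing to object to.
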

\begin{Def}
\label{def:3}
A fold map is said to be {\it special generic} if the indices of singular points are always $0$.
\end{Def}

A smooth manifold is well-known to be regarded as an object in the PL category or a polyhedron canonically. Hereafter, for a smooth manifold, a polyhedron means this. 
For the following, refer to \cite{saeki2}, \cite{saeki3} and \cite{shiota} (or fact \ref{fact:1}).

\begin{Prop}
\label{prop:2}
Let $m>n \geq 1$ be integers. Let $M$ be an $m$-dimensional closed manifold and $N$ be an $n$-dimensional manifold with no boundary. For a fold map $f$, the Reeb space is an $n$-dimensional polyhedron uniquely induced from the manifold $N$ and $q_f$ and $\bar{f}$ are piecewise smooth maps.
The Reeb space of a special generic map $f$ is a compact manifold smoothly immersed into ${\mathbb{R}}^n$ via $\bar{f}$. 
\end{Prop}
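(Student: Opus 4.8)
# Proof Proposal for Proposition 2

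The plan is to build the Reeb space $W_f$ of a fold map $f:M\to N$ directly from a local-to-global analysis of the quotient map $q_f$, exploiting the normal form at fold points from Definition~\ref{def:2} together with the local triviality statements for the singular set and the index strata in Proposition~\ref{prop:1}. First I would recall that, away from the singular set, $f$ is a submersion, so $\bar f$ restricted to the image of $M\setminus S(f)$ is a local homeomorphism onto an open subset of $N$, and the fibers of $q_f$ there are (open unions of) closed manifolds varying in a locally trivial bundle; consequently this part of $W_f$ is an $n$-dimensional manifold mapped by a local diffeomorphism into $N$. The substantive work is near $S(f)$: using the quadratic normal form $(x_1,\dots,x_{n-1},\sum x_j^2-\sum x_j^2)$ one sees that the local model of $q_f$ collapses each connected component of a level set of the Morse-type function in the last coordinate, so the local model of the Reeb space is $\mathbb{R}^{n-1}\times(\text{a graph or half-line})$, i.e.\ $\mathbb{R}^{n-1}$ times the local Reeb space of a Morse function on a disc. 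Stitching these local models along the immersed submanifolds $S(f)$ (and the index strata) of $N$ gives a CW/polyhedral structure.

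The key steps, in order, are: (i) fix a triangulation of $N$ and, via \cite{shiota} (compatibility of triangulations with the map) or Fact~\ref{fact:1}, pull back a compatible triangulation of $M$ so that $f$ is simplicial and $S(f)$ is a subcomplex — this is where I expect to lean hardest on cited finiteness/triangulability results; (ii) observe that $q_f$ is then simplicial for an induced triangulation on the quotient, and since $\sim_f$ identifies only points in a common fiber component, $W_f$ inherits a finite simplicial complex structure of dimension $\le n$; (iii) show $\dim W_f=n$ exactly by exhibiting regular values whose fibers contribute top-dimensional cells (the regular part surjects locally onto $N$); (iv) verify $q_f$ and $\bar f$ are piecewise smooth by checking on each simplex using the normal forms, noting $\bar f$ is an embedding on each cell corresponding to the regular part and a fold-type quotient near $S(f)$; (v) establish uniqueness of the polyhedral structure up to the appropriate equivalence, which follows from uniqueness of the triangulation compatible with $f$ in the sense of \cite{shiota}, or can be phrased intrinsically by noting the stratification of $W_f$ by local types is canonical.

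For the special generic case I would argue separately and more cleanly: when every index $i(p)=0$, the Morse-type local model in the fiber direction is $\sum_{j=n}^m x_j^2$, i.e.\ the fiber components near a singular point are single points (a cone point of a disc of dimension $m-n$ collapses to its center), so $q_f$ does \emph{not} create a branch or a boundary edge but simply a boundary point; hence $W_f$ is a manifold with boundary, the singular set maps bijectively to $\partial W_f$, and $\bar f:W_f\to N$ is an immersion because it is a local diffeomorphism on the interior (submersion part) and restricts near the boundary to the standard inclusion model $D^n\hookrightarrow\mathbb{R}^n$ coming from the normal form (cf.\ the references \cite{saeki2},\cite{saeki3}). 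Compactness of $W_f$ is immediate from compactness of $M$ and continuity of $q_f$.

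The main obstacle I anticipate is step (i)–(ii): making rigorous that a single triangulation can be chosen on $N$, $M$, and $W_f$ simultaneously so that $q_f$, $\bar f$, and $f$ are all simplicial (or at least piecewise smooth) and that the resulting structure on $W_f$ is independent of choices. This is exactly the content of Shiota-type triangulation theorems for subanalytic or tame maps, so rather than reprove it I would cite \cite{shiota} (and Fact~\ref{fact:1}) and devote the proof to the local fold computations and the dimension count; the geometry near $S(f)$ is elementary once the triangulation is in hand.
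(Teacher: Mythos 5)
Your proposal is correct in outline and takes essentially the same route as the paper, which offers no independent proof of Proposition \ref{prop:2} but defers exactly to the sources you lean on: the polyhedral structure of $W_f$ and the piecewise smoothness of $q_f$ and $\bar{f}$ come from triangulability of Thom maps (\cite{shiota}, Fact \ref{fact:1}), and the special generic case is Saeki's theorem (\cite{saeki2}, \cite{saeki3}) that $W_f$ is a compact manifold immersed via $\bar{f}$. One small caution: your local model $\mathbb{R}^{n-1}\times(\mathrm{graph})$ is accurate only where the relevant fibre component meets a single sheet of $f(S(f))$ (crossings of the singular value set of a non-simple fold map give more complicated local germs), but since the polyhedral structure is ultimately taken from \cite{shiota} rather than from this local picture, this does not affect your argument.
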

\begin{Def}
\label{def:4}
A fold map $f$ is said to be {\it simple} if the restriction of $q_f$ to $S(f)$ is injective,
\end{Def}
See \cite{saeki} and \cite{sakuma} for example for simple fold maps.
In the following proposition, the Reeb space is not a manifold in general: it may be a so-called {\it branched manifold}. We  define a {\it branched manifold} in Definition \ref{def:8} as a polyhedron locally PL homeomorphic to a Reeb space in Propositions \ref{prop:5} and \ref{prop:6}. $2$-dimensional polyhedra of this kind are studied in various articles. \cite{turaev} is a pioneering work on so-called $2$-dimensional {\it simple polyhedra} or so-called {\it shadows}. \cite{ishikawakoda}, \cite{martelli}, \cite{naoe}, and so on, are closely related to this. 
These polyhedra form a class wider than that of the $2$-dimensional branched manifolds here. \cite{munozozawa} and \cite{ozawa} are on {\it multibranched surfaces} and regarded as slightly generalized versions of these 2-dimensional branched manifolds.
\begin{Prop}
\label{prop:3}
A special generic map is simple. Let $m>n \geq 1$ be integers. Let $M$ be an $m$-dimensional closed manifold and $N$ be an $n$-dimensional manifold with no boundary. For a simple fold map $f:M \rightarrow N$, $W_f$ is a branched manifold.
\end{Prop}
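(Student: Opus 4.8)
The plan is to establish the two assertions separately: that special generic maps are simple by a direct computation in the index-$0$ normal form, and that $W_f$ is a branched manifold by analysing the germ of the Reeb quotient at each of its points.

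For the first assertion, let $f$ be special generic, so that every $p\in S(f)$ has index $i(p)=0$ by Proposition \ref{prop:1}. In the normal form of Definition \ref{def:2} the map reads near $p$ as $(x_1,\dots,x_m)\mapsto(x_1,\dots,x_{n-1},{\Sigma}_{j=n}^{m}{x_j}^2)$, and reading off the preimage of $f(p)$ in this chart shows that $f^{-1}(f(p))$ agrees with $\{p\}$ on a neighborhood of $p$ in $M$. Hence $\{p\}$ is an open and closed subset of the fiber $f^{-1}(f(p))$, so it is a connected component of that fiber, whence $q_f^{-1}(q_f(p))=\{p\}$. In particular distinct singular points have distinct images under $q_f$, so $q_f|_{S(f)}$ is injective and $f$ is simple in the sense of Definition \ref{def:4}.

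For the second assertion, let $f$ be a simple fold map. By Proposition \ref{prop:2} the Reeb space $W_f$ is an $n$-dimensional polyhedron, so by Definition \ref{def:8} it suffices to show that a neighborhood of each $w=q_f(p)\in W_f$ is PL homeomorphic to one of the local models of Propositions \ref{prop:5} and \ref{prop:6}. Let $K:=q_f^{-1}(w)$, a connected component of the fiber $f^{-1}(\bar{f}(w))$, compact since $M$ is closed. If $K\cap S(f)=\emptyset$, then $f$ is a submersion near $K$; I would choose a saturated open neighborhood $N(K)$ of $K$ (a union of $\sim_f$-classes) with compact closure meeting $f^{-1}(\bar{f}(w))$ only in $K$, and an Ehresmann-type argument makes $f|_{N(K)}$ a locally trivial bundle over a small ball $B\ni\bar{f}(w)$ with connected fiber $K$, so $q_f(N(K))$ is a neighborhood of $w$ PL homeomorphic to $B\cong{\mathbb{R}}^n$, i.e. $w$ is a manifold point. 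If instead $K\cap S(f)\neq\emptyset$, then simplicity forces $K\cap S(f)$ to be a single point $p$, since two distinct singular points in $K$ would have the same image under $q_f$ and contradict injectivity of $q_f|_{S(f)}$; I would then glue the fold normal form at $p$ of index $i:=i(p)$ to the bundle structure of $f$ along $K\setminus\{p\}$ to build a saturated ``normalized'' neighborhood $N(K)$ of $K$, whose Reeb quotient is PL homeomorphic to ${\mathbb{R}}^{n-1}\times R_i$, where $R_i$ is the Reeb space of the model Morse function $(y,z)\mapsto|y|^2-|z|^2$ on ${\mathbb{R}}^{m-i-n+1}\times{\mathbb{R}}^{i}$ --- a finite union of half-lines with a common endpoint, the number of half-lines being the number of connected components of the positive and of the negative level sets of this model. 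Up to PL homeomorphism this is exactly one of the local Reeb spaces of Propositions \ref{prop:5} and \ref{prop:6}.

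The main obstacle is this last case: building the normalized neighborhood $N(K)$ and pinning down the PL type of its Reeb quotient. Since $K\setminus\{p\}$ need not be simply connected, the submersion $f$ restricted to it may carry nontrivial monodromy, and one has to verify that this does not change the PL homeomorphism type of the germ of $W_f$ at $w$. Simplicity enters once more here: near $w$ the Reeb quotient only records the finite set of connected components of the fibers close to $K$, the monodromy merely permutes the finitely many local sheets, and since those sheets are already pairwise separated in $W_f$ over a deleted neighborhood of $w$, the permutation has no effect on the germ of $W_f$ at $w$. Assembling these local PL identifications over $W_f$ then gives the branched manifold structure and finishes the proof.
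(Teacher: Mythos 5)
Your proof of the first assertion (special generic $\Rightarrow$ simple) is correct and is the standard argument. The gap is in the second half, at the claim that the germ of $W_f$ at $w=q_f(K)$ is $\mathbb{R}^{n-1}\times R_i$ with $R_i$ the Reeb space of the model quadratic, ``the number of half-lines being the number of connected components of the positive and of the negative level sets of this model.'' The germ is not determined by the index of the single singular point $p\in K$: the local sheets of nearby fibers produced by the normal form at $p$ may be joined to one another through $K\setminus\{p\}$, so they need not represent distinct points of $W_f$ near $w$ --- your later assertion that the sheets ``are already pairwise separated in $W_f$ over a deleted neighborhood of $w$'' is exactly what fails. Concretely, for the height function on the torus ($m=2$, $n=1$, $i=1$) the component $K$ through the saddle is a figure eight; your model $R_1$ is a cone on four points, while the actual germ is a tripod, because two of the four local sheets are connected through $K\setminus\{p\}$. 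Similarly, for an index-$1$ fold with a twisted identification along $K\setminus\{p\}$ the two lower local sheets lie in a single fiber component and the germ is an interval (the $I$-fibered case of Proposition \ref{prop:6}), not a tripod. So the case analysis has to be carried out on a saturated neighborhood of all of $K$, not on the normal form at $p$, and it yields exactly the three local models of Proposition \ref{prop:6}: collar, $K$-bundle, or trivial $I$-bundle.

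A second, related shortfall: Definition \ref{def:8} does not ask for pointwise germs but for a regular neighborhood of each whole submanifold $Y_j=q_f(C)$ that is either a collar or a $K$-bundle with structure group $S_2$ fixing the prong $\{(r,0)\}$. Your closing ``assembling these local PL identifications'' sentence skips precisely this global step, and the monodromy you dismiss is exactly what the structure group $S_2$ records: one must show the monodromy along $q_f(C)$ preserves the distinguished prong (the side on which the nearby fiber is locally connected at the fold) and at worst swaps the other two. This global structure statement is the content of Proposition \ref{prop:6}, a known structure theorem for simple fold maps (see \cite{saeki}, \cite{sakuma}); the paper deduces Proposition \ref{prop:3} simply by noting that Definition \ref{def:8} is modeled on Propositions \ref{prop:5} and \ref{prop:6}. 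A self-contained proof along your lines would therefore have to reprove Proposition \ref{prop:6}, and as written it does not.
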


Last we add an exposition on so-called ({\it topologically}) {\it stable} maps as a kind of appendices. See \cite{golubitskyguillemin} for example for systematic expositions.
\begin{Def}
\label{def:5}
A smooth map $f$ between manifolds $M$ and $N$ with no boundaries is said to be a {\it stable} map if there exists an open neighborhood in the space of all smooth maps from $M$
 into $N$ endowed with the so-called {\it Whitney $C^{\infty}$ topology}, and for any map $f^{\prime}$ there, there exists a pair $({\Phi}_{f^{\prime}},{\phi}_{f^{\prime}})$ of diffeomorphisms satisfying ${\phi}_{f^{\prime}} \circ f^{\prime}=f \circ {\Phi}_{f^{\prime}}$. If we replace the diffeomorphisms by homeomorphisms, then $f$ is said to be {\it topologically stable}.
\end{Def}
The following fact is a very general fact. We omit the definition of a {\it Thom} map and precise expositions.
\begin{Fact}[\cite{shiota}.]
\label{fact:1}
For a fold map, {\rm (}topologically{\rm )} stable map, and more generally, a so-called {\rm Thom} map, the Reeb space is a polyhedron.
\end{Fact}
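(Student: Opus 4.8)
The plan is to deduce polyhedrality of $W_f$ from the triangulability of Thom maps, which is the deep input due to Shiota, since fold maps, stable maps, and topologically stable maps are all Thom maps and it therefore suffices to treat a Thom map $f:M \rightarrow N$. First I would invoke the existence of a compatible pair of Whitney stratifications of $M$ and $N$ for which $f$ is a Thom map: $f$ carries each stratum of $M$ submersively onto a stratum of $N$, and Thom's regularity condition holds along each pair of strata. This is exactly the structure to which Shiota's triangulation theorem for (subanalytic, and more generally Thom) maps applies, yielding simplicial complexes $K$ and $L$ together with PL homeomorphisms $|K| \cong M$ and $|L| \cong N$ under which $f$ corresponds to a simplicial map $g:|K| \rightarrow |L|$.

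Second, with $f$ realized as a simplicial map, I would analyze the equivalence relation ${\sim}_f$ combinatorially. Over the interior of each open simplex $\sigma$ of $L$ a simplicial map admits a locally trivial product structure, so the finite set ${\pi}_0(g^{-1}(y))$ of connected components of the fiber is constant as $y$ ranges over the interior of $\sigma$. This stratifies $W_f$ by the strata of $L$: over each open simplex the Reeb space is a disjoint union of copies of that simplex indexed by ${\pi}_0$ of the fiber, the quotient map $q_f$ is stratum-preserving, and passing from a simplex to one of its faces is recorded by the surjection on ${\pi}_0$ induced by inclusion of the corresponding fibers, which encodes how components of fibers merge.

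Third, I would assemble these pieces into a genuine polyhedron. After choosing a common subdivision for which the ${\pi}_0$-merging maps are realized simplicially, $W_f$ is exhibited as the quotient of $|K|$ by an equivalence relation that is simplicial with respect to the subdivision; the quotient of a simplicial complex by such a relation is again a polyhedron, and $q_f$ and $\bar{f}$ are PL maps. Since $\bar{f}$ is injective on each set of fiber-components and maps to $N$, we recover in particular that $\dim W_f \leq \dim N = n$, consistently with Proposition \ref{prop:2}.

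The main obstacle is the first step: the triangulation theorem for Thom maps is genuinely hard, requiring control of the stratified geometry (Whitney and Thom regularity together with the isotopy lemmas) to produce triangulations of source and target that are compatible with the map simultaneously. Granting that input, the combinatorial analysis of ${\sim}_f$ is the technical heart of what remains, and the subtle point there is verifying that the locally constant family ${\pi}_0(g^{-1}(y))$ glues coherently across faces so that the resulting identifications are simplicial rather than merely continuous.
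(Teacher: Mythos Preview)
The paper does not give a proof of this statement; it is stated as a cited fact from \cite{shiota}, preceded by the explicit disclaimer that the definition of a Thom map and precise expositions are omitted. The only further comment is that for smooth functions and stable maps into the plane the result has been shown explicitly in \cite{kobayashisaeki} and elsewhere.

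Your outline is the standard route and is correct in substance: Shiota's triangulation theorem reduces the problem to a simplicial map between finite complexes, and then one shows that the Stein factorization (Reeb space) of a simplicial map is again a polyhedron by analyzing how $\pi_0$ of the fiber is locally constant over open simplices and how components merge across faces. That second step is carried out in detail in Hiratuka--Saeki (the paper's reference \cite{hiratukasaeki}), along essentially the lines you sketch. So your proposal fills in what the paper deliberately leaves as a black box, and the strategy matches the literature the paper points to rather than diverging from it.
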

For smooth functions of suitable classes, stable maps into the plane on closed manifolds whose dimensions are greater than $2$, this has been explicitly shown in \cite{kobayashisaeki} and so on.
\begin{Prop}
\label{prop:4}
A fold map $f$ in Definition \ref{def:2} is stable if and only if for each value $p \in f(S(f))$ the following conditions hold.
\begin{enumerate}
\item The preimage $f^{-1}(p)$ consists of exactly $l \geq 1$ points and is denoted by $\{p_j\}_{j=1}^l$.
\item The dimension of the intersection of all images of the differential ${d_f}_{p_j}$ is $n-l$.
\end{enumerate}
\end{Prop}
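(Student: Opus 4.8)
The plan is to deduce the statement from the Thom--Mather theory of stability, where the global condition on the singular value set emerges as the multilocal part of infinitesimal stability. Since $M$ is closed, $f$ is proper, and Mather's theorem (see \cite{golubitskyguillemin}) asserts that a proper smooth map is stable precisely when it is infinitesimally stable. I would use the standard fact that infinitesimal stability of a proper map splits into two families of conditions: a monogerm (local) condition at each point, and a multigerm (multilocal) condition at each finite collection of points sharing a common image value. The points $p_j$ are read as the singular points of $f$ lying over the singular value $p$; finiteness of this collection is automatic, since $S(f)$ is compact by Proposition \ref{prop:1} and $f|_{S(f)}$ is an immersion, hence locally injective, so each fiber is discrete and therefore finite, giving the integer $l \geq 1$ of (1). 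Thus (1) is essentially setup, and the substantive constraint is (2).

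First I would dispose of the monogerm conditions. At a regular point the germ is a submersion and is trivially infinitesimally stable; at a singular point the germ is, by Definition \ref{def:2}, the standard fold germ, a classical example of an infinitesimally stable monogerm (see \cite{golubitskyguillemin} and \cite{saeki}). Hence for any fold map the local conditions hold automatically, and the entire content of stability is concentrated in the multigerm conditions at the finite fibers $\{p_1,\dots,p_l\}$ meeting $S(f)$.

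Next I would identify the multigerm condition with the self-transversality (normal crossing) of the immersion $f|_{S(f)}$. The key linear-algebra observation is that at a fold point $p_j$ the image $df_{p_j}(T_{p_j}M)$ is the $(n-1)$-dimensional subspace of $T_pN$ tangent to the corresponding sheet of $f(S(f))$: in the normal form of Definition \ref{def:2} the singular set is the coordinate $(n-1)$-plane $\{x_n=\dots=x_m=0\}$, and both $df_{p_j}(T_{p_j}M)$ and $d(f|_{S(f)})_{p_j}(T_{p_j}S(f))$ equal the span of the first $n-1$ target directions. The multigerm infinitesimal-stability equation $\theta(f)=tf(\theta_{M,S})+\omega f(\theta_{N,p})$ for a tuple of fold germs reduces, after absorbing the image of each $tf$, to the requirement that these sheet-tangent hyperplanes be in general position, that is, that the $l$ hyperplanes $df_{p_j}(T_{p_j}M)$ meet transversally. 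A transverse intersection of $l$ hyperplanes in the $n$-dimensional space $T_pN$ has codimension $l$ and hence dimension $n-l$ (forcing $l\le n$), which is exactly condition (2); conversely, the dimension equalling $n-l$ forces transversality. Assembling the two directions is then formal: if $f$ is stable it is infinitesimally stable, so every multigerm is infinitesimally stable and (1), (2) hold at each singular value; and if (1), (2) hold everywhere, then all monogerms and all multigerms are infinitesimally stable, whence $f$ is infinitesimally stable and therefore stable.

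The main obstacle is the middle step: the precise reduction of the multigerm infinitesimal-stability equation for a tuple of fold germs to the transversality of the sheet tangents. This requires computing, at each $p_j$, the submodule $tf(\theta_M)_{p_j}+\omega f(\theta_N)$ of $\theta(f)_{p_j}$ and verifying that its cokernel is detected exactly by the normal directions to $df_{p_j}(T_{p_j}M)$, so that joint surjectivity over the fiber is equivalent to the hyperplanes spanning $T_pN$ in the transverse manner measured by (2). Once this local computation is in hand the global equivalence follows immediately, and if desired I would shorten the argument by instead quoting the known characterization of stable fold maps through normal crossings of $f|_{S(f)}$ from \cite{golubitskyguillemin} and \cite{saeki}, leaving only the dimension bookkeeping $n-l$ to be checked.
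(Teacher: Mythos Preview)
The paper does not supply a proof of Proposition~\ref{prop:4}; it is stated as a standard fact, with the reader directed to \cite{golubitskyguillemin} for the general theory of stability. There is therefore nothing in the paper to compare your argument against line by line.

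That said, your proposal is the correct and standard route to this characterization. You rightly interpret the ambiguously worded condition~(1): the literal preimage $f^{-1}(p)$ is an $(m-n)$-dimensional compact manifold when $m>n$, so the finite set $\{p_j\}$ must be read as $f^{-1}(p)\cap S(f)$, and you justify its finiteness cleanly via compactness of $S(f)$ and local injectivity of the immersion $f|_{S(f)}$. Your reduction---proper plus infinitesimally stable implies stable (Mather), the fold monogerms are automatically infinitesimally stable, and the multigerm condition becomes the normal-crossing requirement on the sheet tangents---is exactly the argument one finds in \cite{golubitskyguillemin} and \cite{saeki}. The linear-algebra identification of $df_{p_j}(T_{p_j}M)$ with the $(n-1)$-plane tangent to the image sheet of $S(f)$, read off from the normal form, is correct, and the passage from ``$l$ hyperplanes in general position'' to ``intersection of dimension $n-l$'' is the right bookkeeping. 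The one place you flag as an obstacle---the explicit computation of the cokernel of $tf(\theta_M)+\omega f(\theta_N)$ for a fold multigerm---is precisely what the cited references carry out, and your suggested shortcut of quoting the known normal-crossing characterization and checking only the dimension count is how this proposition is typically used in the literature and how the present paper treats it.
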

\begin{Def}
\label{def:6}
An immersion of a smooth manifold with no boundary into another smooth manifold with no boundary is said to have {\it normal crossings only}, if this satisfies the two conditions the restriction $f {\mid}_{S(f)}$ in Proposition \ref{prop:4} satisfies (where we change these two in a suitable way to apply here). 
\end{Def}
\begin{Ex}
A Morse function such that the preimage of a singular value always has exactly one singular point is stable. On the other hand, a Morse function is stable if and only if this condition holds. It is a well-known fundamental fact that such Morse functions exist densely on any (closed) manifold. 
A fold map such that the restriction to the singular set is an embedding is stable.
\end{Ex}
In fact, a fold map can be deformed into a stable one by a slight perturbation or we can find a stable fold map close to the original map under the Whitney $C^{\infty}$ topology.
Moreover, we concentrate on simple fold maps in the present study and for such maps, this notion does not affect on our arguments essentially.
The following is an important proposition in knowing homology groups and cohomology rings of the manifolds from Reeb spaces in specific cases.

Hereafter, a {\it standard sphere} is a smooth manifold which is diffeomorphic to a unit sphere $S^k$: $S^k$ denotes the $k$-dimensional unit sphere. For the $k$-dimensional Euclidean space ${\mathbb{R}}^k$ and $p \in {\mathbb{R}}^k$, $||p||$ denotes the distance between $p$ and the origin $0$ where the Euclidean space is endowed with the standard Euclidean metric.

A {\it linear} bundle is a smooth bundle whose fiber is diffeomorphic to a unit disc $D^k:=\{x \mid ||x|| \leq 1\} \subset {\mathbb{R}}^k$ or a unit sphere $S^k:=\{x \mid ||x||=1\} \subset {\mathbb{R}}^{k+1}$ and whose structure group acts on the fiber linearly in a canonical way.

\begin{Prop}
\label{prop:5}
Let $m>n \geq 1$ be integers. Let $M$ be an $m$-dimensional closed manifold and $N$ be an $n$-dimensional manifold with no boundary. Let $f$ be a simple fold map. Assume also the following three.
\begin{enumerate}
\item Indices of singular points of $f$ are always $0$ or $1$.
\item Preimages of regular values are disjoint unions of copies of standard spheres.
\item $M$ is orientable if $m-n>1$.
\end{enumerate}
Then we have the following five.
\begin{enumerate}
\item For the image $q_f(C)$ of each connected component $C$ of the singular set consisting of singular points of index $0$, there exists a regular neighborhood PL homeomorphic to $q_f(C) \times [0,1]$ where $q_f(C)$ is identified canonically with $q_f(C) \times \{0\}$. Furthermore, the composition of the restriction of $q_f$ to the preimage of the regular neighborhood with the canonical projection to $q_f(C)$ gives a linear bundle whose fiber is diffeomorphic to $D^{m-n+1}$.
\item For the image $q_f(C)$ of each connected component $C$ of the singular set consisting of singular points of index $1$, there exists a regular neighborhood PL homeomorphic to the total space of a PL bundle over $q_f(C)$ whose fiber is PL homeomorphic to $K:=\{(r\cos t,r \sin t) \mid 0 \leq r \leq 1, t=\frac{2}{3} a \pi, a=0,1,2.\} \subset {\mathbb{R}}^2$. Furthermore the following three must hold.
\begin{enumerate}
\item The structure group is isomorphic to the symmetric group $S_2$ of degree $2$ preserving the distance $||p||$ between each point $p \in K$ and the origin $0$ and fixing $\{(r,0) \mid 0 \leq r \leq1.\}$.
\item $q_f(C)$ is identified canonically with $q_f(C) \times \{0\}$ in the bundle.
\item Furthermore, the composition of the restriction of $q_f$ to the preimage of the regular neighborhood with the canonical projection to $q_f(C)$ gives a smooth bundle whose fiber is diffeomorphic to a manifold obtained by removing three disjointly and smoothly embedded copies of $D^{m-n+1}$ in a copy of $S^{m-n+1}$. 
\end{enumerate}
\item $q_f$ induces the isomorphisms ${q_f}_{\ast}:H_j(M;A) \rightarrow H_j(W_f;A)$, ${q_f}^{\ast}:H^j(W_f;A) \rightarrow H^j(M;A)$ and ${q_f}_{\ast}:{\pi}_j(M) \rightarrow {\pi}_j(W_f)$ for $0 \leq j<m-n$. If $f$ is special generic map, then these isomorphisms work for $j=m-n$.
\item ${q_f}^{\ast}:H^{\ast}(W_f;A) \rightarrow H^{\ast}(M;A)$ preserves cup products for any pair such that the sum of the degrees is smaller than $m-n$ and we can replace ''smaller than'' by ''smaller than or equal to'' for a special generic map $f$.
\item Cup products of $M$ for any pair such that the degree of each class is smaller than $m-n$ and that the sum of the degrees is greater than $n$ always vanish. For a special generic map $f$, we can replace ''smaller than'' by ''smaller than or equal to'' and ''greater than'' by ''greater than or equal to''.
\end{enumerate}
\end{Prop}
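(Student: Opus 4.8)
The plan is to reduce the statement to the two local normal forms in Definition \ref{def:2} together with one soft principle: a continuous map that is, over each stratum of a stratification of its target, a fibre bundle with $(m-n-1)$-connected fibre is $(m-n)$-connected, and hence induces isomorphisms on $\pi_j$ and on (co)homology with arbitrary coefficients in degrees $j<m-n$. Throughout I would use that $f$ is \emph{simple} (Definition \ref{def:4}), so that $q_f|_{S(f)}$ is injective: each $q_f(C)$, for $C$ a connected component of $S(f)$, is then a copy of the closed $(n-1)$-manifold $C$ (Proposition \ref{prop:1}); the connected component of a preimage containing a point of $C$ contains no other singular point; and the images $q_f(C)$ of distinct components are disjoint closed subsets of $W_f$, so they admit mutually disjoint regular neighbourhoods.

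I would establish (1) and (2) first, by inspecting the normal forms. Near an index-$0$ point of $C$, where $f$ is $(x_1,\dots,x_m)\mapsto(x_1,\dots,x_{n-1},\sum_{j=n}^{m}x_j^2)$, the preimage of a short interval issuing transversally from $f(C)$ in $N$ is a cone on $S^{m-n}$, i.e.\ a copy of $D^{m-n+1}$, and the corresponding germ of $W_f$ is that interval with $q_f(C)$ over one endpoint; running this over $C$ and using that the normal form is quadratic gives the regular neighbourhood $q_f(C)\times[0,1]$ and exhibits $q_f$ over it as the unit disc bundle of the normal bundle of $C$, which is (1). Near an index-$1$ point of $C$, where $f$ is $(x_1,\dots,x_m)\mapsto(x_1,\dots,x_{n-1},\sum_{j=n}^{m-1}x_j^2-x_m^2)$, the germ of a nearby preimage is connected on one side of the singular value and has two components on the other, so the germ of $W_f$ is the tripod $K$ with $q_f(C)$ over its centre; by simplicity the singular-fibre component through $c\in C$ has $c$ as its only branch point, and a regular neighbourhood of it in $M$ is, using that the nearby regular preimages are standard spheres so that the three families of nearby components cap off with discs, diffeomorphic to $S^{m-n+1}$ with three disjoint copies of $D^{m-n+1}$ removed, one per ray of $K$. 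Globalising over $C$, the only gluing freedom is whether the monodromy around a loop of $C$ interchanges the two components on the two-sheeted side, i.e.\ a homomorphism $\pi_1(C)\to S_2$; this produces the $K$-bundle with structure group $S_2$ fixing the ray over the one-sheeted side and the associated $(S^{m-n+1}\setminus 3D^{m-n+1})$-bundle of (2). Orientability of $M$ when $m-n>1$ is used to put these bundles in the stated linear or standard form.

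For (3) I would cover $W_f$ by the disjoint regular neighbourhoods of the $q_f(C)$ from (1), (2) and by the complement $W_r$ of slightly shrunk copies of them. Over $W_r$, $q_f$ is a smooth $S^{m-n}$-bundle (the fibre is a single standard sphere, since the fibres of $q_f$ are connected); over an index-$0$ neighbourhood it is a $D^{m-n+1}$-bundle; over an index-$1$ neighbourhood it is an $(S^{m-n+1}\setminus 3D^{m-n+1})$-bundle, and $S^{m-n+1}\setminus 3D^{m-n+1}\simeq S^{m-n}\vee S^{m-n}$. Every such fibre is $(m-n-1)$-connected, so over each piece $q_f$ is $(m-n)$-connected by the homotopy exact sequence of the bundle, as is $q_f$ over the overlaps ($S^{m-n}$-bundles again); a finite Mayer--Vietoris and van Kampen induction over the pieces (or a construction of compatible cell structures) then gives that $q_f\colon M\to W_f$ is $(m-n)$-connected, which yields the asserted isomorphisms for $j<m-n$. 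For $f$ special generic only index $0$ occurs, so by (1) and Proposition \ref{prop:2} I may invoke Saeki's structure theorem for special generic maps \cite{saeki2}: $M=\partial E$ with $E$ the total space of a linear $D^{m-n+1}$-bundle over the compact $n$-manifold $W_f$, and $q_f$ is the composite $\partial E\hookrightarrow E\simeq W_f$. Since $m>n$ forces $q_f(S(f))=\partial W_f\neq\emptyset$, the manifold $W_f$ admits a handle decomposition with handles of index $\le n-1$, hence $E$ is obtained from $\partial E$ by attaching handles of index $\ge m-n+2$, so $\partial E\hookrightarrow E$ is $(m-n+1)$-connected and all the isomorphisms extend to $j\le m-n$. (Equivalently, by Poincaré--Lefschetz duality $H_j(E,\partial E)\cong H^{m+1-j}(W_f)$ vanishes for $j\le m-n+1$ because $W_f$ is a compact $n$-manifold with non-empty boundary, and the long exact sequence of the pair gives $H_j(\partial E)\cong H_j(E)$ for $j\le m-n$.)

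Parts (4) and (5) are then formal. Since $q_f^{\ast}$ is always a ring homomorphism and, by (3), a degreewise isomorphism for $j<m-n$ (resp.\ $j\le m-n$ in the special generic case), it identifies cup products of any pair of classes whose degrees sum to $<m-n$ (resp.\ $\le m-n$); this is (4). For (5), given $\alpha\in H^{p}(M;A)$ and $\beta\in H^{q}(M;A)$ with each of $p,q$ less than $m-n$ (resp.\ at most $m-n$) and $p+q>n$ (resp.\ $\ge n$), write $\alpha=q_f^{\ast}\alpha'$ and $\beta=q_f^{\ast}\beta'$ by (3); then $\alpha\cup\beta=q_f^{\ast}(\alpha'\cup\beta')$ with $\alpha'\cup\beta'\in H^{p+q}(W_f;A)=0$, since $W_f$ is an $n$-dimensional polyhedron (Proposition \ref{prop:2}) — and $H^{n}(W_f;A)=0$ as well when $f$ is special generic, $W_f$ being then a compact $n$-manifold with non-empty boundary. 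I expect the main obstacle to be entirely inside (1) and (2): the explicit identification near an index-$1$ component, namely that the relevant piece of $M$ is the $(S^{m-n+1}\setminus 3D^{m-n+1})$-bundle with structure group exactly $S_2$ acting as described, and the verification that the stratified-bundle decomposition of $q_f$ assembles into a single $(m-n)$-connected map. Once those structural facts are in hand, (3)--(5) follow as above.
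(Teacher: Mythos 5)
The paper itself contains no proof of Proposition \ref{prop:5}: it is quoted as known background, essentially from Saeki--Suzuoka \cite{saekisuzuoka} and, for the special generic refinement, from \cite{saeki2}. So your proposal can only be measured against that standard route, and it does follow it: local fold normal forms giving the collar and $K$-bundle structure of $W_f$, the observation that over each piece of the resulting decomposition of $W_f$ the map $q_f$ is a bundle with $(m-n-1)$-connected fibre ($D^{m-n+1}$, $S^{m-n}$, or $S^{m-n+1}$ minus three discs), hence $(m-n)$-connected, and then the formal consequences for cup products. Parts (1), (3), (4), (5) are argued correctly in outline.

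There are, however, two genuine gaps. First, in (2) you assert that the germ of $W_f$ at an index-$1$ value is the tripod $K$ because ``the preimage is connected on one side of the singular value and has two components on the other''. Locally there are two sheets, but whether these two sheets lie in two \emph{distinct components} of the global fibre is exactly the dichotomy recorded in the paper's own Proposition \ref{prop:6}: for a simple fold map the fibre of the regular neighbourhood over an index-$1$ component may also be the interval $I$, the two local sheets merging into one fibre component. Excluding the $I$-case is precisely where the hypotheses are consumed, and your proposal never confronts it. For $m-n\geq 2$ the sphere-fibre assumption does the job: if the two sheets lay in a single sphere component, the component on the other side would be obtained from $S^{m-n}$ by attaching a $1$-handle, hence would contain an $S^1\times S^{m-n-1}$ (or its twisted analogue) and could not be a standard sphere. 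For $m-n=1$ this argument gives nothing, and the $I$-case can genuinely occur with circle fibres (a simple stable Morse function with three critical points on the projective plane realizes it, the relevant piece of $M$ being non-orientable); so here an orientability-type argument on the corresponding piece of $M$ is what is really needed, whereas you invoke orientability only vaguely, ``to put these bundles in the stated form''.

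Second, the special generic improvement to $j=m-n$ is derived from the claim that $M=\partial E$ with $E$ a linear $D^{m-n+1}$-bundle over $W_f$. Saeki's structure theorem gives less: a linear $D^{m-n+1}$-bundle over $\partial W_f$ glued to a smooth $S^{m-n}$-bundle over the complement of a collar; $M$ is the boundary of a disc bundle over all of $W_f$ only when that sphere bundle is linear and extends (automatic for $m-n\leq 3$, not in general). The conclusion survives, since the genuine decomposition already shows that the fibre sphere bounds a disc fibre over the collar, which kills exactly the possible kernel and cokernel of ${q_f}_{\ast}$ and ${q_f}^{\ast}$ in degree $m-n$ (alternatively one can run your Mayer--Vietoris argument with the pair given by the two pieces), but as written this step rests on an overstated structure theorem. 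A smaller remark of the same kind: your parenthetical ``$m>n$ forces $q_f(S(f))=\partial W_f\neq\emptyset$'' uses $N={\mathbb{R}}^n$ (as in the rest of the paper), not the general boundaryless $N$ of the statement.
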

\begin{Def}
\label{def:7}
A stable fold map $f$ satisfying the assumption of Proposition \ref{prop:5} is said to be {\it standard-spherical}.
\end{Def}
This class, extending the class of special generic maps naturally, has been studied in various scenes. See \cite{kitazawa0.1}, \cite{kitazawa0.2}, \cite{kitazawa0.5}, \cite{kitazawa4}, \cite{kitazawa5}, \cite{kitazawa6}, \cite{kitazawa13}, \cite{kitazawa16}, \cite{kitazawa17}, \cite{kitazawa18}, \cite{saekisuzuoka} and so on.
We also present a general proposition as Proposition \ref{prop:6}, regarded as a proposition explaining Proposition \ref{prop:3} more precisely.

\begin{Prop}
\label{prop:6}
Let $m>n \geq 1$ be integers. Let $M$ be an $m$-dimensional closed manifold and $N$ be an $n$-dimensional manifold with no boundary. Let $f$ be a simple fold map. 
Then we have the following three.
\begin{enumerate}
\item For the image $q_f(C)$ of each connected component $C$ of the singular set consisting of singular points of index $0$, there exists a small regular neighborhood PL homeomorphic to $q_f(C) \times [0,1]$ where $q_f(C)$ is identified canonically with $q_f(C) \times \{0\}$. Furthermore, the composition of the restriction of $q_f$ to the preimage of the regular neighborhood with the canonical projection to $q_f(C)$ gives a linear bundle whose fiber is diffeomorphic to $D^{m-n+1}$.
\item For the image $q_f(C)$ of each connected component $C$ of the singular set consisting of singular points of index $j>0$, there exists a small regular neighborhood PL homeomorphic to the total space of a PL bundle over $q_f(C)$
such that fibers satisfy either of the following two. Furthermore, the composition of the restriction of $q_f$ to the preimage of the regular neighborhood with the canonical projection to $q_f(C)$ gives a smooth bundle.
\begin{enumerate}
\item The fiber is PL homeomorphic to $K:=\{(r\cos t,r \sin t) \mid 0 \leq r \leq 1, t=\frac{2}{3} a \pi, a=0,1,2.\} \subset {\mathbb{R}}^2$. Furthermore the following two must hold.
\begin{enumerate}
\item The structure group is isomorphic to the symmetric group $S_2$ of degree $2$ preserving the distance $||p||$ between each point $p \in K$ and the origin $0$ and fixing $\{(r,0) \mid 0 \leq r \leq1.\}$.
\item $q_f(C)$ is identified canonically with $q_f(C) \times \{0\}$ in the bundle.
\end{enumerate}
\item The fiber is PL homeomorphic to $I:=[-1,1] \subset \mathbb{R}$. Furthermore the following two must hold.
\begin{enumerate}
\item The structure group is trivial. 
\item $q_f(C)$ is identified canonically with $q_f(C) \times \{0\}$ in the bundle.
\end{enumerate}
\end{enumerate}
\item The composition of the restriction of $q_f$ to the preimage of the regular neighborhood with the canonical projection to $q_f(C)$ gives a smooth bundle. 
\end{enumerate}
\end{Prop}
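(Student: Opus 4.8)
The plan is to reduce the statement to the local normal form of a fold singularity and then to use simplicity to show that, in a small regular neighbourhood of the image of each component of the singular set, the Reeb space is an associated fibre bundle over that image whose fibre is the Reeb space of the transverse fold germ. First I would fix a connected component $C$ of $S(f)$ consisting of singular points of a single index $j$; by Proposition~\ref{prop:1} such a $C$ is a closed $(n-1)$-dimensional submanifold of $M$ on which $f$ restricts to an immersion. Simplicity enters at once: since $q_f|_{S(f)}$ is injective, $q_f|_C\colon C\to q_f(C)$ is a continuous bijection of the compact space $C$ into the Hausdorff polyhedron $W_f$ (Proposition~\ref{prop:2}), hence a homeomorphism, and the sets $q_f(C)$ belonging to distinct components are pairwise disjoint. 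So it is enough to describe $W_f$ near a single such $q_f(C)$, the neighbourhood being chosen small enough to miss the images of the other components.

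Next I would invoke Definition~\ref{def:2}: around any $c\in C$ there are coordinates identifying a neighbourhood with $D^{n-1}\times D^{m-n+1}$ in which $f$ is $(y,z)\mapsto(y,g_j(z))$, with $g_j(z)=\sum_{k=1}^{a}z_k^{2}-\sum_{l=1}^{b}z_{a+l}^{2}$, $a=m-n+1-j$, $b=j$, so that $a\ge b\ge 0$ and $a\ge 1$ whenever $j\ge 1$. Then the germ of $W_f$ at $q_f(c)$ is $D^{n-1}\times W_{g_j}$, and I would compute the Reeb space $W_{g_j}$ of $g_j\colon D^{m-n+1}\to\mathbb R$ by counting components of its level sets: over $t=0$ one gets a point if $j=0$ and a connected (conical) set if $j\ge 1$, while over $t\neq 0$ one gets, up to deformation retract, $S^{a-1}\times\mathbb R^{b}$ for $t>0$ and $\mathbb R^{a}\times S^{b-1}$ for $t<0$ (the former empty when $j=0$, $t<0$). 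Reading off components this gives $W_{g_j}\cong[0,1]$ with $q_{g_j}(0)$ an endpoint when $j=0$; $W_{g_j}\cong K$ with $q_{g_j}(0)$ its centre and the two legs over $\{t<0\}$ when $j=1$ and $a\ge 2$; and $W_{g_j}\cong I$ with $q_{g_j}(0)$ an interior point when $j\ge 2$. The only remaining possibility is $a=1$, which forces $b=1$ and $m-n=1$.

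I would then assemble the local pictures. A small tubular neighbourhood of $f(C)$ in $N$ has $f$-preimage which is, fibrewise over $C\cong q_f(C)$, a copy of $g_j^{-1}([-\varepsilon,\varepsilon])\subset D^{m-n+1}$; these assemble into a smooth fibre bundle over $q_f(C)$ whose transition maps preserve the normal form $g_j$, and applying $q_f$ fibrewise produces the asserted regular neighbourhood of $q_f(C)$ together with its projection onto $q_f(C)$, whose composition with $q_f$ on the preimage is that same smooth bundle projection, establishing assertion~(3). For $j=0$ the transition maps preserve $\sum z_k^{2}$, so the bundle reduces to a linear $D^{m-n+1}$-bundle and the structure group acts trivially on the quotient interval; hence the neighbourhood of $q_f(C)$ is $q_f(C)\times[0,1]$ with $q_f(C)=q_f(C)\times\{0\}$, which is assertion~(1). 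For $j\ge 1$ the fibre is $K$ or $I$; in the $K$-fibre case ($j=1$, $a\ge 2$) the leg over $\{t>0\}$ is distinguished from the two legs over $\{t<0\}$ by the number of components of the adjacent regular preimages, so the structure group can only permute the latter pair, giving the $S_2$ of assertion~(2)(a), while in the $I$-fibre case the structure group is trivial, giving assertion~(2)(b).

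I expect the main obstacle to be the consistency of this patching: verifying that the local models really glue into honest PL fibre bundles over $q_f(C)$ with no $4$-valent or more degenerate local structure. This is exactly where simplicity is indispensable, since it prevents two singular components from meeting in $W_f$ and prevents a component's image from self-intersecting, and it must be reinforced by the combinatorics of the elementary surgeries that the regular preimages undergo across $f(C)$. The one genuinely new point beyond the $j\ge 2$ analysis is the leftover case $m-n=1$, $j=1$, in which the Reeb space of $g_1$ by itself is $4$-valent: there one argues that $f^{-1}(\mathrm{pt})$ is a closed $1$-manifold and that crossing $f(C)$ performs a single index-$1$ surgery on it, which either merges two circles into one, splits one circle into two, or rebuilds one circle --- so that $W_f$ near $q_f(C)$ is again a bundle with fibre $K$ (first two cases, monodromy permuting the two legs on the side carrying two components) or $I$ (third case, trivial structure group), never the $4$-valent germ. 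Once the patching and this case are settled, the remaining identifications of fibre types and structure groups are routine.
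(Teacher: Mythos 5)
The paper states Proposition \ref{prop:6} without its own proof (it is presented as a known structural fact, with references to the literature on simple fold maps), so your argument has to stand on its own, and it has a genuine gap at its central step. You assert that the germ of $W_f$ at $q_f(c)$ is $D^{n-1}\times W_{g_j}$, where $W_{g_j}$ is the Reeb space of the transverse quadratic germ. This is false for indefinite folds: a neighborhood of $q_f(c)$ in $W_f$ is the Reeb space of $f$ restricted to a saturated neighborhood of the whole singular fiber component $F$ through $c$, and distinct local sheets of $\{g_j=\pm\varepsilon\}$ can lie in one and the same connected component of a nearby fiber, hence become a single point of $W_f$. You noticed this identification phenomenon only in the case $a=b=1$ (that is, $m-n=1$, $j=1$), but it occurs for every index-$1$ component with $a=m-n\geq 2$ as well: the two sheets of $\{g_1=-\varepsilon\}$ limit to the two components of the link $S^{a-1}\times S^{0}$ of $c$ in $F$, so whether that side contributes one leg or two is decided by the connectivity of $F\setminus\{c\}$, not by the germ. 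Concretely, the standard Morse function with four critical points on $S^1\times S^2$ is a simple fold map ($m=3$, $n=1$) whose index-$1$ point has a pinched torus as its singular fiber component; the regular fibers on both sides are connected and the Reeb graph is $2$-valent there, i.e.\ the local model is $I$, whereas your case analysis forces $K$ whenever $j=1$ and $a\geq 2$. So the dichotomy ``$j=1,a\geq2\Rightarrow K$; $j\geq2\Rightarrow I$'' on which your identification of fibers and structure groups rests is incorrect, and assertion (2) is not established by the argument as written.

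The repair is to run the analysis on a saturated neighborhood of $F$ from the start: simplicity gives exactly one singular point on $F$, the link of that point in $F$ is $S^{a-1}\times S^{b-1}$, and the number of legs of $W_f$ on each side of $q_f(C)$ equals the number of connected components of the corresponding smoothing glued to $F\setminus\{c\}$; this yields $[0,1]$ for $j=0$ and $K$ or $I$ for $j>0$ (both genuinely occurring when $j=1$, only $I$ when $j\geq2$), after which one fibers the description over $q_f(C)\cong C$ using local triviality of the family of singular fiber components, which also gives assertion (3). Note finally that your treatment of the structure groups is thinnest exactly where care is needed: for the $K$-fiber the count of adjacent fiber components does pin down the $S_2$ of (2)(a), but for the $I$-fiber you merely assert triviality, and one must actually rule out monodromy exchanging the two ends of $I$; this is automatic when the transverse models on the two sides of $f(C)$ are topologically distinct, but in the symmetric cases (e.g.\ $a=b$, or the merged index-$1$ situation above) it requires a separate argument about the two local sides of $f(C)$ in $N$, which your proposal does not supply.
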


\section{Main Theorems and existing supporting studies.}
Let $n$ be a positive integer. Let $\{G_j\}_{j=0}^n$ be a sequence of finitely generated commutative groups of length $n$ such that $G_0$ is trivial, that $G_{n-1}$ is free, that $G_n$ is free and non-trivial, and that a suitable additional condition (A) holds.  

Motivated by the importance and the difficulty of construction of explicit fold maps and obtaining information of the resulting manifolds, the author has obtained several results of the following types via {\it bubbling operations}, which are surgery operations to manifolds and maps. The property (B) denotes a suitable property on the structure of the (resulting) fold maps.
\begin{Thm}
\label{thm:1}
Let $f_0:M_0 \rightarrow {\mathbb{R}}^n$ be a stable fold map on an $m$-dimensional closed and connected manifold $M_0$ into ${\mathbb{R}}^n$ satisfying $m \geq n \geq 1$.

Then by a finite iteration of bubbling operations starting from $f_0$, we have a stable fold map $f$ satisfying the property {\rm (}B{\rm )} and $H_j(W_f;\mathbb{Z}) \cong H_j(W_{f_0};\mathbb{Z}) \oplus G_j$ for $0 \leq j \leq n$.

Especially, if $f_0$ is standard-spherical, then we can construct a standard-spherical map $f$ in this way.  

\end{Thm}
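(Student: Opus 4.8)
The plan is to argue by induction on the number of bubbling operations, so that everything reduces to understanding the effect of a single bubbling operation on the Reeb space, which can be read off by a Mayer--Vietoris argument.

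First I would fix the local picture of one bubbling operation. Choose a small closed $n$-disk $D$ contained in the regular value set of ${\mathbb{R}}^n$ and a connected component $\tilde{D}$ of ${\bar{f}_0}^{-1}(D) \subset W_{f_0}$; by Proposition \ref{prop:2} (and, in the standard-spherical case, Proposition \ref{prop:5}) this is an $n$-disk over which $f_0$ is a trivial bundle whose fiber is a disjoint union of spheres. Inside $\mathrm{Int}\,\tilde{D}$ choose a closed connected submanifold $P$ of codimension $k \geq 1$ with trivial normal bundle and tubular neighborhood $P \times D^{k}$, and modify $f_0$ over that tube by the standard local model which creates exactly one new connected component of preimages, born along a new fold of index $0$ (capped, where necessary, by a fold of index $1$ so that preimages of regular values stay disjoint unions of standard spheres). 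The resulting $f$ is a simple fold map, stable after an arbitrarily small perturbation (as recalled after Proposition \ref{prop:4}), and by construction it has the structural property (B); moreover, if $f_0$ was standard-spherical the local model keeps all indices in $\{0,1\}$, keeps the fibers standard spheres, and does not change the orientability of the total space, so $f$ is again standard-spherical, which settles the last assertion once the homological statement is established.

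Next I would compute homology. The construction exhibits $W_f$ as $W_{f_0}$ with a new subpolyhedron $N$, PL homeomorphic to $P \times D^{k}$, attached along $P \times S^{k-1}$ (the birth locus of the new fiber component). Since this attaching sphere bundle sits inside the contractible disk $\tilde{D} \subset W_{f_0}$, the Mayer--Vietoris homomorphism has its $W_{f_0}$-component trivial in positive degrees, and one reads off the short exact sequences
\[
0 \longrightarrow H_j(W_{f_0};\mathbb{Z}) \longrightarrow H_j(W_f;\mathbb{Z}) \longrightarrow H_{j-k}(P;\mathbb{Z}) \longrightarrow 0, \qquad 0 \le j \le n,
\]
which split whenever $H_{j-k}(P;\mathbb{Z})$ is free (in particular for $j=n$ when $P$ is oriented, giving the nonzero class that ends up in the top homology). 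Iterating over finitely many operations carried out on pairwise disjoint disks, $H_\ast(W_f;\mathbb{Z})$ becomes $H_\ast(W_{f_0};\mathbb{Z})$ plus a direct sum of shifted reduced homologies of the chosen submanifolds. To realize a prescribed $\{G_j\}_{j=0}^n$, decompose each finitely generated $G_j$ into cyclic summands: free summands in degree $j$ are produced by bubbling along spheres $S^{n-j}$ of codimension $j$ (each contributing $\mathbb{Z}$ in degree $j$ and $\mathbb{Z}$ in degree $n$), and the torsion occurring in the intermediate degrees is produced by bubbling along closed manifolds whose homology carries exactly that torsion --- which is precisely what the auxiliary condition (A) is there to guarantee can be done compatibly. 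The shape of the hypotheses on $\{G_j\}$ is exactly what this mechanism forces: bubbling never disconnects $W_{f_0}$, so $G_0$ is trivial; each nontrivial operation feeds a free summand into degree $n$, and at least one operation is performed, so $G_n$ is free and non-trivial; and the degree-$(n-1)$ contributions of the relevant building blocks are codimension-one homology of closed oriented manifolds, hence free (by Poincar\'e duality and universal coefficients), so $G_{n-1}$ is free.

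The main obstacle is the precise local analysis of the bubbling operation: one must check carefully that the local surgery really yields a (post-perturbation) stable fold map, that it carries the structural property (B), and --- in the standard-spherical setting --- that the new folds have index $0$ or $1$, that preimages of regular values remain disjoint unions of standard spheres, and that orientability of $M$ is preserved. A secondary, more bookkeeping-type difficulty is realizing the torsion summands in the middle degrees by closed manifolds with trivial normal bundle embedded in an $n$-disk without introducing unwanted free summands in degrees $n-1$ and $n$ (and, for torsion in degree $1$, without clashing with the codimension constraint); pinning down the hypothesis that makes all of this simultaneously possible is exactly the role of condition (A).
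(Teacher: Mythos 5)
Your proposal reconstructs the right mechanism, but note that the present paper does not actually prove Theorem \ref{thm:1}: it is quoted as an existing result of the author (from \cite{kitazawa}, \cite{kitazawa7} and related papers), with the conditions (A) and (B) deliberately left unspecified, and the paper only reviews what a bubbling operation is. Against that review your argument matches: each operation replaces the preimage of a regular neighborhood $N(P)\cong P\times D^{k}$ of a generating object $P$ of codimension $k$ and attaches a new $n$-dimensional piece to $W_{f_0}$ along $P\times S^{k-1}$, so excision (or Mayer--Vietoris) gives the added relative homology $H_{j-k}(P;\mathbb{Z})$ in degree $j$, with a new free class in degree $n$; this is exactly the computation behind the cited theorems. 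One small improvement: the exact sequences split unconditionally, not only when $H_{j-k}(P;\mathbb{Z})$ is free, because collapsing the new bubble back onto $N(P)\subset W_{f_0}$ gives a retraction $W_f\rightarrow W_{f_0}$.

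The genuine soft spot is the realization step for torsion. Bubbling along a single closed connected submanifold requires a manifold of dimension $n-k$ carrying the prescribed torsion in the right degree \emph{and} admitting an embedding with trivial normal bundle into the regular value set of ${\mathbb{R}}^n$; such a manifold need not exist for an arbitrary finitely generated group in an arbitrary degree, and this is precisely why the paper's review of the bubbling operation allows the generating object to be a polyhedron obtained as a finite iteration of bouquets of such submanifolds (and why the torsion results in \cite{kitazawa9} and the cited constructions use that extra flexibility). You defer all of this to condition (A), which is defensible only because (A) is never stated in this paper; a self-contained proof would have to make the choice of generating submanifolds or bouquet polyhedra, together with the bookkeeping of the forced free summands in degrees $k$ and $n$, explicit. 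The same remark applies to the local analysis (stability after perturbation, property (B), preservation of the standard-spherical conditions), which you correctly identify as the remaining work rather than carry out; in the cited papers this is exactly the content of the definition and basic properties of the bubbling operations.
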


\begin{Thm}
\label{thm:2}
Let $f_0:M_0 \rightarrow {\mathbb{R}}^n$ be a stable fold map on an $m$-dimensional closed and connected manifold $M_0$ into ${\mathbb{R}}^n$ satisfying $m \geq n \geq 1$.

Then by finite iterations of bubbling operations, we have a family $\{f_{\lambda}:M_{\lambda} \rightarrow {\mathbb{R}}^n\}$ of infinitely many stable fold maps satisfying the property {\rm (}B{\rm )} and $H_j(W_{f_{\lambda}};\mathbb{Z}) \cong H_j(W_{f_0};\mathbb{Z}) \oplus G_j$ for $0 \leq j \leq n$ and for distinct ${\lambda}_1$ and ${\lambda}_2$, the integral cohomology rings of the resulting Reeb spaces are mutually non-isomorphic.

As in Theorem \ref{thm:1}, if the given map $f_0$ is standard-spherical, then we can construct these fold maps as standard-spherical maps.

\end{Thm}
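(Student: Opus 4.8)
The plan is to promote the single map produced by Theorem \ref{thm:1} to an infinite family by exploiting the geometric freedom in the bubbling operations that does \emph{not} affect the homology of the Reeb space but \emph{does} affect its cup product structure. Concretely, I would keep fixed the finite list of abstract submanifolds along which the bubbling operations are performed (hence fixing the homological effect), and vary only the way these submanifolds are immersed into, and their adjoined sheets glued inside, the Reeb space already constructed; a non-negative integer $\lambda$ measuring a mutual or self intersection number of these immersions in ${\mathbb{R}}^n$ will index the family. The resulting rings will then be separated by an honest isomorphism invariant of a graded ring, namely the quotient of the top cohomology group by the subgroup of decomposable elements (equivalently, the elementary divisors of the top-degree cup product pairing).

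First I would fix a finite sequence of bubbling operations realizing Theorem \ref{thm:1}, so that $H_j(W_f;\mathbb{Z}) \cong H_j(W_{f_0};\mathbb{Z}) \oplus G_j$, and record, via Propositions \ref{prop:5} and \ref{prop:6}, the regular-neighborhood model of the Reeb space near the image of each singular set component: a bubbling operation along a compact submanifold $P$ of the top stratum contributes, up to simple homotopy equivalence, a copy of $P/\partial P$, so its effect on integral homology depends only on the relative homotopy type of $(P,\partial P)$, not on how $P$ is immersed in ${\mathbb{R}}^n$ nor on the gluing of the new sheet along $\partial P$. Then, for each $\lambda \in {\mathbb{Z}}_{\geq 1}$, I would re-run the same sequence but arrange the immersion of one bubble (or of a pair of bubbles whose new classes lie in complementary positive degrees --- and the role of condition (A) is precisely to make at least one such configuration available, e.g. a cup-square $\alpha \cup \alpha$ when some relevant $G_p$ is non-trivial with $2p=n$, or a product $\alpha \cup \gamma$ in general) to have algebraic intersection number $\lambda$ while still realizing the same relative homotopy types. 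This yields maps $f_{\lambda}$ all satisfying property (B), all stable (each bubbling operation being performed so that $q_f {\mid}_{S(f)}$ has normal crossings only, and so that indices stay in $\{0,1\}$ and regular fibers stay standard spheres when $f_0$ is standard-spherical, exactly as in the last sentence of Theorem \ref{thm:1}), and all satisfying $H_j(W_{f_{\lambda}};\mathbb{Z}) \cong H_j(W_{f_0};\mathbb{Z}) \oplus G_j$ for $0 \leq j \leq n$ by the rigidity observation above.

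Finally I would compute the ring. Using Proposition \ref{prop:5}(3)--(5) and the explicit bundle models over the singular value sets, the classes born from the modified bubbling operation pair to $\lambda$ times a generator of a ${\mathbb{Z}}$-summand $\langle \beta \rangle$ of $H^n(W_{f_{\lambda}};\mathbb{Z})$, while (after replacing the remaining operations, if needed, by homologically equivalent but cup-product-inert ones) every other product of positive-degree classes that lands in degree $n$ is arranged to vanish. Hence the subgroup of decomposables in $H^n(W_{f_{\lambda}};\mathbb{Z})$ is $\lambda\langle\beta\rangle$, so $H^n$ modulo decomposables acquires a ${\mathbb{Z}}/\lambda{\mathbb{Z}}$ direct summand. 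Since this quotient is an invariant of the graded ring $H^{\ast}(W_{f_{\lambda}};\mathbb{Z})$ and its torsion distinguishes $\lambda=1,2,3,\dots$ pairwise, the cohomology rings of the $W_{f_{\lambda}}$ are pairwise non-isomorphic, which is the assertion.

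The step I expect to be the main obstacle is the homological rigidity claim in the middle paragraph, together with compatibility with the vanishing constraints of Proposition \ref{prop:5}(5): one must choose immersions (or attaching data) algebraically non-trivial enough to force the product $\lambda\beta$, yet homologically so inert that no homology group in any degree is perturbed, and one must place the non-trivial products in a degree range where Proposition \ref{prop:5}(5) does not force them to vanish. I expect this to be careful but essentially routine bookkeeping of the Mayer--Vietoris sequences and intersection pairings attached to the regular-neighborhood models of Propositions \ref{prop:5} and \ref{prop:6}, rather than a genuinely new difficulty.
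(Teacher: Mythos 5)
First, a point of reference: the paper itself gives no proof of Theorem \ref{thm:2}. Theorems \ref{thm:1} and \ref{thm:2} are recalled as results the author obtained earlier via bubbling operations (see \cite{kitazawa}, \cite{kitazawa7}, \cite{kitazawa9}, \cite{kitazawa12}), and the present paper only sketches what a bubbling operation is; so your proposal can only be judged on its own merits. In spirit it is close to the known mechanism: keep the homological effect of the operations fixed and vary a parameter $\lambda$ that scales cup products landing in top degree, then separate the rings by an invariant measuring divisibility of decomposable elements in $H^n$. That is indeed how such families are distinguished in the cited works.

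However, there are genuine gaps. The central one is your ``homological rigidity'' claim, which you state in a form that is false in general: the effect of attaching the new sheet does depend on the gluing, not only on the relative homotopy type of $(P,\partial P)$ (attaching a cone along a null-homotopic curve versus along $\lambda$ times a generator changes $H_1$ by $\mathbb{Z}$ versus $\mathbb{Z}/\lambda\mathbb{Z}$, exactly the kind of degree-$\lambda$ attaching data you want to exploit). The correct statement is rigidity under varying only the homology class in $W_{f_0}$ represented by the embedded submanifold along which one bubbles, with the new sheet attached in the standard regular-neighborhood model; that the Mayer--Vietoris sequence then still splits, while the products of old classes with the new dual classes acquire the factor $\lambda$, is precisely the content of the theorem and cannot be dismissed as routine bookkeeping --- it is the step you would have to prove. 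Second, your parameter $\lambda$ is introduced as a self- or mutual intersection number of immersions in $\mathbb{R}^n$; algebraic intersection numbers of closed submanifolds in a contractible target carry no invariant meaning, and the immersed, normal-crossing version of the bubbling operation is explicitly excluded in this paper (it is deferred to \cite{kitazawa14}, \cite{kitazawa15}). The meaningful freedom is the choice of the class (for example $\lambda$ times a primitive class) represented by the chosen preimage component in $W_{f_0}$, which is not what you wrote. Finally, your appeal to condition (A) and to replacing the remaining operations by ``cup-product-inert'' ones is unverified: you must show this can be done while still realizing all the prescribed groups $G_j$ and property (B), and Proposition \ref{prop:5} constrains the manifold $M$, not the ring $H^{\ast}(W_f;\mathbb{Z})$, so it neither helps nor obstructs in the way your last paragraph suggests. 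As it stands the proposal is a plausible strategy outline, but the decisive computation is exactly where it defers to ``bookkeeping.''
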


We review a {\it bubbling operation} shortly. 

First, in a connected component of the regular value set, we choose a suitable closed, and connected smooth submanifold with no boundary or a polyhedron represented as one obtained by a finite iteration of taking a bouquet of two such submanifolds starting from finitely many such submanifolds. We remove the interior of a connected component of the preimage of a small regular neighborhood of this. We attach a new smooth map so that the singular value set is the disjoint union of the original singular value set and a connected submanifold diffeomorphic and parallel to the boundary made after the removal.
It is motivated by several operations for construction of new stable (fold) maps in \cite{kobayashi}, \cite{kobayashi2} and \cite{kobayashisaeki}. Especially, a {\it bubbling surgery}, first defined and systematically presented in \cite{kobayashi2}, has motivated the author to study in this way. A bubbling surgery is a case where the submanifold is a point.

This is extended for cases where connected submanifolds or submanifolds in subpolyhedra are images of manifolds via immersions having normal-crossings only and cases where the submanifolds are standard spheres are studied in \cite{kitazawa14} and \cite{kitazawa15}. In the present paper, we do not concentrate on such cases.

For studies related to this, see \cite{kitazawa}, \cite{kitazawa7}, \cite{kitazawa9}, \cite{kitazawa12}, \cite{kitazawa14}, \cite{kitazawa15}, and so on. For ones concentrating more on manifolds admitting the maps, see \cite{kitazawa13}, \cite{kitazawa16}, and so on.  
\cite{kitazawa0.1}--\cite{kitazawa0.5} are on {\it round} fold maps: they are, shortly, stable fold maps such that the singular values are embedded concentric spheres and canonical projections of unit spheres are simplest round fold maps. This class forms an important subclass of fold maps, introduced first by the author. In \cite{kobayashi2} and so on, Kobayashi has studied this class without defining the class. This class has also motivated the author to introduce the bubbling operations, extending construction of some round fold maps.

We present results or regarded as ones on global topologies of Reeb spaces of stable fold maps of suitable classes. 

\cite{masumotosaeki} and so on say, that a graph with no loops is regarded as the Reeb space of a smooth function of a suitable class on a closed surface. Posing additional conditions on the graphs, the graphs can be regarded as the Reeb graphs of Morse functions satisfying suitable conditions on closed surfaces and manifolds. Furthermore, if graphs may have loops, then by replacing the targets with circles, similar facts hold.

We explain higher dimensional cases. There exist several known related facts for $2$-dimensional cases. We introduce several notions. Respecting Proposition \ref{prop:6}, we can define a {\it branched} manifold of the {\it class A {\rm (}B{\rm )}}. It may be a notion to be defined in a more general form. However, in the present paper, we define the notion as follows.

\begin{Def}
\label{def:8}
A $k$-dimensional compact polyhedron $X$ satisfying the following two is said to be a {\it branched} manifold of {\it the class A}.
\begin{enumerate}
\item There exists a family $\{Y_j\}$ of finitely many compact and connected PL submanifolds with no boundaries.
\item $X-{\sqcup}_j Y_j$ is an $k$-dimensional manifold with no boundary.
\item For each $Y_j$, either of the following two holds. 
\begin{enumerate}
\item There exists a small regular neighborhood PL homeomorphic to $Y_j \times [0,1]$ where $Y_j$ is identified canonically with $Y_j \times \{0\}$.
\item There exists a small regular neighborhood PL homeomorphic to the total space of a PL bundle over $Y_j$
such that the fiber is PL homeomorphic to $K:=\{(r\cos t,r \sin t) \mid 0 \leq r \leq 1, t=\frac{2}{3} a \pi, a=0,1,2.\} \subset {\mathbb{R}}^2$. Furthermore the following two must hold.
\begin{enumerate}
\item The structure group is isomorphic to the symmetric group $S_2$ of degree $2$ preserving the distance $||p||$ between each point $p \in K$ and the origin $0$ and fixing $\{(r,0) \mid 0 \leq r \leq1.\}$.
\item $Y_j$ is identified canonically with $Y_j \times \{0\}$ in the bundle.
\end{enumerate}
\end{enumerate}
If we replace the phrase ''the symmetric group $S_2$ of degree $2$ preserving the distance $||p||$ of each point $p \in K$ and the origin $0$ and fixing $\{(r,0) \mid 0 \leq r \leq1.\}$''
 by ''the symmetric group $S_3$ of degree $3$ preserving the distance $||p||$ between each point $p \in K$ and the origin $0$'', then $X$ is said to be a {\it branched} manifold of {\it the class B}.
\end{enumerate}

\end{Def}

A ($2$-dimensional) {\it simple polyhedron} with no {\it vertices} is a branched manifold of the class B.  The following is essentially equivalent to Proposition 5.2 in \cite{kobayashisaeki}.

\begin{Fact}[Essentially shown in \cite{kobayashisaeki}.]
\label{fact:2}
Let $m>2$ be an integer.
If a $2$-dimensional branched manifold $X$ of the class $A$ satisfies $H_j(X;\mathbb{Z}/2\mathbb{Z}) \cong H_j(D^2;\mathbb{Z}/2\mathbb{Z})$ for any $j$, then it is PL homeomorphic to the Reeb space of a standard-spherical map on an $m$-dimensional closed and connected manifold into the plane. 
\end{Fact}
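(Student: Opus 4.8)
The plan is to realize $X$ as a Reeb space by building it up from the unit disc $D^2$ through a finite sequence of bubbling operations, each preserving the standard-spherical property; this is the strategy underlying Proposition 5.2 of \cite{kobayashisaeki}, recast in the present framework. For the base case I would take $f_0$ to be the canonical projection of the unit sphere $S^m$ onto $\mathbb{R}^2$, which is special generic, hence standard-spherical, with $W_{f_0}$ PL homeomorphic to $D^2$: it has a single index-$0$ singular circle mapped onto $\partial D^2$ and all regular preimages are copies of $S^{m-2}$.

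Next I would analyze the combinatorial structure of the class-A branched surface $X$. By Definition \ref{def:8}, $X$ decomposes along a finite disjoint union of circles into a compact surface-with-boundary part $X_0=\overline{X\setminus\bigcup_j Y_j}$, the circles $Y_j$ of product type (which in a realization must become images of index-$0$ folds, i.e.\ they furnish $\partial X$), and the circles $Y_j$ of triple-book type (which must become images of index-$1$ folds). The heart of the argument is to convert the hypothesis $H_*(X;\mathbb{Z}/2\mathbb{Z})\cong H_*(D^2;\mathbb{Z}/2\mathbb{Z})$ into a "branched handle decomposition" of the pair $(X,\ \text{branch locus})$ mirroring the trivial one of $D^2$: via Mayer--Vietoris across the branch circles together with a counting and cancellation argument over $\mathbb{Z}/2\mathbb{Z}$, I would show that $X$ is obtained from $D^2$ by finitely many elementary moves, each of which either attaches a collar annulus along an interior circle (creating one triple-book circle together with one new index-$0$ boundary circle) or attaches a piece modeled on a regular neighborhood of a bouquet of circles, in such a way that $\mathbb{Z}/2\mathbb{Z}$-acyclicity in positive degrees is maintained at every stage. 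It is precisely here that the coefficient ring must be $\mathbb{Z}/2\mathbb{Z}$: the two-sidedness and orientation data around triple-book circles and inside $X_0$ are invisible modulo $2$, so the naive homology count suffices to organize the decomposition.

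Each such elementary move is then realized by a bubbling operation on the current standard-spherical map: one chooses in the regular value set a circle, respectively a bouquet of circles, parallel to the image of the piece to be attached, removes the interior of the corresponding preimage component, and re-glues a fold map whose singular value set acquires a parallel copy, exactly as in the review of bubbling operations preceding Theorem \ref{thm:1}. By the argument of that theorem the map can be kept standard-spherical, and one checks directly from Proposition \ref{prop:5} that the effect on the Reeb space is exactly the prescribed move (an attached collar annulus glued along a circle producing precisely the triple-book local model of Proposition \ref{prop:5}(2), with a new index-$0$ circle from Proposition \ref{prop:5}(1)). After exhausting the moves one obtains a standard-spherical $f:M\to\mathbb{R}^2$ on a closed manifold $M$ with $W_f$ PL homeomorphic to $X$; since $X$ is connected, so is $M$, and the orientability requirement of Proposition \ref{prop:5}, needed only when $m-n>1$, is arranged along the way because $H_1(X;\mathbb{Z}/2\mathbb{Z})=0$ forces all accumulated monodromies to be orientation-preserving.

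The step I expect to be the main obstacle is the second one: extracting from the bare $\mathbb{Z}/2\mathbb{Z}$-homology hypothesis a genuine move-by-move build-up of $X$, i.e.\ proving that no class-A branched surface with the homology of a disc is "exotic" relative to this procedure, and in particular that the triple-book circles can always be introduced one at a time in a homologically trivial manner without being forced simultaneously to create $2$-cycles (which a point-bubbling would do, since point-bubbling adds a free summand to $H_2$). A secondary technical point, which can also be handled directly rather than inductively, is checking that the local models of Proposition \ref{prop:5} glue to an honest smooth closed manifold $M$ — equivalently that the linear $D^{m-1}$-bundles over the index-$0$ circles, the $S^{m-2}$-bundle over $X_0$, and the surgered sphere cobordisms over the triple-book circles have compatible structure-group data — and this compatibility is exactly what the vanishing of $H_1(X;\mathbb{Z}/2\mathbb{Z})$ and $H_2(X;\mathbb{Z}/2\mathbb{Z})$ guarantees.
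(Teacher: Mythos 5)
Your overall strategy --- start from the canonical projection of $S^m$ with Reeb space $D^2$ and rebuild $X$ by a sequence of bubbling operations preserving the standard-spherical property --- is not the argument behind Fact \ref{fact:2}, and as written it has a genuine gap at exactly the point you flag yourself: the claim that the hypothesis $H_\ast(X;\mathbb{Z}/2\mathbb{Z})\cong H_\ast(D^2;\mathbb{Z}/2\mathbb{Z})$ yields a move-by-move build-up of $X$ from $D^2$ in which every elementary move is of the type a bubbling operation can produce. This is not a technical detail to be deferred; it is the entire content. A bubbling operation, as reviewed before Theorem \ref{thm:1}, attaches the new piece of the Reeb space along (the boundary of) a regular neighborhood of a chosen submanifold or bouquet sitting in the regular value part of the current Reeb space, so the polyhedra reachable from $D^2$ by iterating it carry a very particular nested structure of branch circles, and each point-type move moreover creates a mod $2$ two-cycle, which you must then kill or avoid. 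Nothing in your Mayer--Vietoris ``counting and cancellation'' sketch shows that an arbitrary class-A branched surface that is merely $\mathbb{Z}/2\mathbb{Z}$-acyclic in positive degrees admits such a filtration; you would in effect need a structure theorem for these polyhedra, which is a statement at least as strong as Fact \ref{fact:2} itself. The final assertions that ``$H_1(X;\mathbb{Z}/2\mathbb{Z})=0$ forces all accumulated monodromies to be orientation-preserving'' and that the compatibility of the bundle data ``is exactly what the vanishing of $H_1$ and $H_2$ guarantees'' are likewise unsubstantiated; the mod $2$ condition enters the actual proof in a more specific way than a blanket appeal to acyclicity.

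For comparison, the argument in \cite{kobayashisaeki} (and the sketch given in this paper for the analogous statement derived from Fact \ref{fact:3}) does not reconstruct $X$ from $D^2$ at all: one works directly over the given polyhedron, taking the decomposition of Definition \ref{def:8} into the branch circles $Y_j$ and the surface part, constructing over each product-type $Y_j$ a product of a height-type Morse function on $D^{m-1}$ with the identity, over each triple-book-type $Y_j$ a ($\mathbb{Z}/2\mathbb{Z}$-equivariant, possibly twisted) family of Morse functions on $S^{m-1}$ minus three open discs as in Proposition \ref{prop:5}, and over the complement of these regular neighborhoods the projection of a trivial $S^{m-2}$-bundle, then gluing; the homological hypothesis is what makes the gluings possible so that the total space is a closed manifold. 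If you want to salvage your bubbling approach you would have to prove the filtration statement above as a separate lemma; otherwise the direct construction over $X$ is the route to follow.
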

The following is closely related and to some extent equivalent to Fact \ref{fact:2} and a result proven by a different argument.

\begin{Fact}[\cite{naoe}]
\label{fact:3}
If a $2$-dimensional branched manifold $X$ of the class $B$ is simply-connected and satisfies $H_j(X;\mathbb{Z}) \cong H_j(D^2;\mathbb{Z})$ for any $j$, then it collapses to a point.
\end{Fact}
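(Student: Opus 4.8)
The plan is to read the stratified structure of $X$ off Definition~\ref{def:8}, convert the hypotheses ``simply connected'' and ``$H_\ast(\,\cdot\,;\mathbb Z)\cong H_\ast(D^2;\mathbb Z)$'' into rigid combinatorial constraints, and then collapse. A $2$-dimensional branched manifold of the class $\mathrm B$ has, by Definition~\ref{def:8} (see also Proposition~\ref{prop:6}), no ``vertices'': its singular set $S$, the union of the submanifolds $Y_j$, is a disjoint union of circles, each of which carries either a collar regular neighbourhood $Y_j\times[0,1]$ --- call these the \emph{boundary} circles --- or a $K$-bundle regular neighbourhood --- call these the \emph{triple} circles. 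The complement $X\setminus S$ is an open surface with cylindrical ends, so capping the ends yields a compact surface $\bar{U}$, a disjoint union $\bigsqcup_i\bar{U}_i$ of compact surfaces with boundary, and $X$ is recovered from $\bar{U}$ by declaring, for each boundary circle $Y_j$, one boundary circle of $\bar{U}$ to be the boundary of $X$, and by gluing, for each triple circle $Y_j$, the mapping cylinder of a $3$-fold covering $\tilde C_j\to Y_j$ along a union $\tilde C_j$ of boundary circles of $\bar{U}$. Hence $X$ deformation retracts onto the homotopy pushout of $\bigsqcup_j Y_j\leftarrow\bigsqcup_j\tilde C_j\to\bar{U}$; let $\Gamma$ denote the associated dual graph, with one vertex per component $\bar{U}_i$ and per triple circle $Y_j$, and one edge per component of each $\tilde C_j$.

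Next I would extract the homological consequences. Since the $\bar{U}_i$ are surfaces with boundary, $H_2(\bar{U};\mathbb Z)=0$ and $\chi(X)=\chi(\bar{U})=\sum_i\chi(\bar{U}_i)$, so acyclicity forces $\sum_i\chi(\bar{U}_i)=1$. A closed component of $\bar{U}$ would be a connected component of $X$, which is impossible since $X$ is connected and no closed surface has the integral homology of a point; hence every $\bar{U}_i$ has non-empty boundary, so $\chi(\bar{U}_i)\le 1$ and $\operatorname{rk}H_1(\bar{U}_i;\mathbb Z)=1-\chi(\bar{U}_i)$. Feeding $H_\ast(X;\mathbb Z)\cong H_\ast(D^2;\mathbb Z)$ into the Mayer--Vietoris sequence of the pushout, the vanishing of $H_1(X;\mathbb Z)$ shows that $\Gamma$ is a tree, and the vanishing of $H_2(X;\mathbb Z)$ shows that the map $H_1(\bigsqcup_j\tilde C_j;\mathbb Z)\to H_1(\bar{U};\mathbb Z)\oplus H_1(\bigsqcup_j Y_j;\mathbb Z)$ is an isomorphism.

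The crux --- and the step I expect to be the main obstacle --- is to prove that there are \emph{no} triple circles. For a triple circle $Y_j$ the covering $\tilde C_j\to Y_j$ is recorded by a partition of $3$ giving the degrees of its components, namely $\{3\}$, $\{2,1\}$ or $\{1,1,1\}$. If some component has degree $\ge 2$, then running van Kampen along the tree $\Gamma$ (Bass--Serre theory) one finds that $\pi_1(X)$ admits a non-trivial cyclic quotient, or that $H_1(X;\mathbb Z)$ acquires torsion, contradicting the hypotheses; if all three components have degree $1$, then three boundary circles of $\bar{U}$ are identified along $Y_j$, and a Mayer--Vietoris computation around a regular neighbourhood of $Y_j$, combined with the tree structure from the previous step, produces a non-zero class in $H_2(X;\mathbb Z)$, again a contradiction. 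The delicate point is that a tree of surface pieces joined along triple circles can be simply connected and acyclic without any single piece being visibly obstructive, so the rigidity of $3$-fold covers of the circle and the global tree structure must be exploited together rather than locally; making this bookkeeping uniform over all shapes of $\Gamma$ is where the real work lies.

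Once triple circles are excluded, $S$ consists of boundary circles only, so $X$ is a compact connected surface with non-empty boundary; being simply connected, it is the $2$-disc by the classification of compact surfaces, and the $2$-disc collapses to a point. (Equivalently, at this stage one may simply collapse a collar of a boundary circle and iterate, each elementary reduction preserving simple-connectivity and acyclicity, until only a point remains.) This would complete the proposed argument.
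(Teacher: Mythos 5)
Your structural setup (cutting $X$ along its singular circles, the Euler characteristic count $\chi(\bar U)=1$, the tree condition on the dual graph $\Gamma$) is sound, but the crux claim --- that the hypotheses exclude triple circles --- is false, and with it the whole endgame. A counterexample: let $X_0$ be the union of an annulus $A=S^1\times[0,1]$ and a disc $D^2$, with $\partial D^2$ identified with the core circle $S^1\times\{\frac{1}{2}\}$. Then $X_0$ is a branched manifold of the class A (hence of the class B) in the sense of Definition \ref{def:8}: its singular set consists of the two collar-type circles $S^1\times\{0\}$, $S^1\times\{1\}$ and of the core circle, which is a triple circle with trivial monodromy. $X_0$ collapses to a point (collapse $A$ onto its core, then collapse the disc), so it is simply connected and has the integral homology of $D^2$; yet its triple circle has all three adjacent sheets attached with degree $1$, and no non-zero class in $H_2(X_0;\mathbb{Z})$ arises in your Mayer--Vietoris computation, because one of the three sheets is a disc and the map $H_1(\tilde C_j;\mathbb{Z})\rightarrow H_1(\bar U;\mathbb{Z})\oplus H_1(Y_j;\mathbb{Z})$ is injective. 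Likewise, the union of a M\"obius band and a disc attached along the core circle is a collapsible class-A branched manifold whose triple circle has one sheet of degree $2$, while $\pi_1$ is trivial and $H_1$ has no torsion; so the other branch of your dichotomy (degree $\geq 2$ forces a cyclic quotient of $\pi_1$ or torsion in $H_1$) fails as well.

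Consequently the final step is unreachable, and it would in any case prove too much: the conclusion of the Fact is only that $X$ collapses to a point, not that $X$ is a $2$-disc, and the examples above show that such $X$ need not even be a manifold. The genuine content of the statement is exactly the part you set aside as bookkeeping: one must produce collapses in the presence of triple circles. The paper does not reprove this; it quotes \cite{naoe}, where the polyhedron is encoded by the decorated graphs of \cite{martelli} and the collapsing is extracted from that combinatorial description, the argument showing along the way only that the polyhedron is of the class A (no monodromy of order three), which is much weaker than the absence of triple circles that your proof requires.
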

More precisely, he uses graphs with some labels which are fundamental and strong tools in representing the polyhedra, introduced in \cite{martelli}. 
He also shows that this is of the class A in proving this. 

Note also that Naoe shows this to prove important results in low-dimensional differential topology there: he has shown that $3$-dimensional closed manifolds {\it represented by such shadows} or admitting simple stable fold maps whose Reeb spaces are isomorphic to such polyhedra are $3$-dimensional standard spheres and that $4$-dimensional compact manifolds {\it represented by such shadows} are diffeomorphic to the $4$-dimensional unit disc. For shadows and such manifolds represented by shadows, see \cite{turaev} and see also \cite{ishikawakoda}, for example. Note also that Martelli's studies such as \cite{martelli} are closely related to these studies. 

From this, we can see that such a polyhedron is PL homeomorphic to the Reeb space of a standard-spherical map on an $m$-dimensional closed and connected manifold into the plane where $m \geq 3$. We show a sketch of a proof of this fact. In Definition \ref{def:8}, we construct a smooth map on the preimage of the small regular neighborhood of $Y_j$ as a smooth family of a Morse function over $Y_j$ ($Y_j \times \{0\}$) satisfying either of the following two respecting the topology of the regular neighborhood of $Y_j$.
\begin{enumerate}
\item A product map of a Morse function on a copy of $D^{m-1}$ which is obtained by considering a natural height and represented as the form $(x_1,\cdots,x_{m-1}) \rightarrow {\Sigma}_{j=1}^{m-1} {x_j}^2$ for suitable coordinates and the identity map on $Y_j \times \{0\}$ for $Y_j$ such that the regular neighborhood is PL homeomorphic to $Y_j \times [0,1]$.
\item A smooth family of a Morse function on a manifold obtained by removing the interiors of $3$ disjointly and smoothly embedded copies of an ($m-1$)-dimensional unit disc from an ($m-1$)-dimensional standard sphere on $Y_j \times \{0\}$ satisfying the following two for $Y_j$ such that the regular neighborhood is PL homeomorphic to the total space of a bundle over $Y_j$ whose fiber is PL homeomorphic to $K$.
\begin{enumerate}
\item The Morse function has exactly one singular point in the interior, the image is a closed interval and the preimage of the minimal value and that of the maximal value are exactly one connected component of the boundary and exactly two connected components of the boundary, respectively.
\item The Morse function is invariant under a suitable smooth action by $\mathbb{Z}/2\mathbb{Z}$: the family is trivial if the bundle over $Y_j \times \{0\}$ whose fiber is PL homeomorphic to $K$ is product and not trivial if the bundle over $Y_j \times \{0\}$ whose fiber is PL homeomorphic to $K$ is not trivial.
\end{enumerate}
\end{enumerate}
Last, on the preimage of the complementary set of the disjoint union of the interiors of the small regular neighborhoods for all $Y_j$'s, we construct the projection of a trivial bundle over the complementary set whose fiber is an ($m-2$)-dimensional standard sphere. A kind of induction respecting collapsing for simplicial complexes of the branched manifolds presents a desired map.

Main Theorems give new explicit construction of (simple) stable fold maps and their Reeb spaces, which are branched manifolds of the class A. 

Theorem \ref{thm:3} is of a new type. Theorems \ref{thm:4} and \ref{thm:5} are regarded as results generalizing situations of ones of existing results 
presented as Theorems \ref{thm:1}--\ref{thm:2} abstractly.

Before presenting them, we explain fundamental notions.

\begin{Def}
For a module $A$ over a commutative ring $R$ having a unique identity element $1$ different from the zero element $0$
\begin{enumerate}
\item If $a \in A$ is not represented as $a=ra^{\prime}$ for any pair of an element $r \in R$ which is not a unit and $a^{\prime} \in A$, then $a$ is said to be a {\it unit element}. 
\item For an arbitrary unit element $a \in A$, we can uniquely define a homomorphism $a^{\ast}$ into $R$ satisfying the following two and it is said to be the {\it dual} of $a$.
\begin{enumerate}
\item $a^{\ast}(a)=1$.
\item $a^{\ast}(b)=0$ for any submodule $B$ giving an internal direct sum decomposition $<a> \oplus B$ of $A$ and any $b \in B$ where $<a>$ is the submodule generated by $\{a\}$ ($a$).
\end{enumerate}
\end{enumerate}
\end{Def}
In the present discussions, duals of homology classes (for which we can define the duals) are important. They are regarded as cohomology classes in canonical ways.  
\begin{Def}
For a smooth or PL manifold $X$, a homology class $c$ is said to be {\it represented} by a closed and smooth or PL submanifold $Y$ with no boundary and with an orientation if $c$ is realized as the value of the homomorphism induced by the inclusion at the so-called {\it fundamental class}: the {\it fundamental class} of an oriented closed and smooth or PL manifold is the generator of the top homology group respecting the orientation and note that we do not need the orientation if the coefficient is isomorphic to $\mathbb{Z}/2\mathbb{Z}$. 
\end{Def}
A {\it stably parallelizable} manifold is a smooth manifold such that the Whitney sum of the tangent bundle and a $1$-dimensional trivial real bundle over the manifold is a trivial real vector bundle.
\begin{Thm}
\label{thm:3}
Let $l \geq 0$ and $m>n \geq 1$ be integers and $\{X_j\}_{j=1}^l$ be a family of finitely many $n$-dimensional closed, connected and stably parallelizable manifolds Let $\{(F_{j,1},F_{j,2})\}_{j=1}^l$ be a family of pairs of {\rm (}$m-n${\rm )}-dimensional closed, connected and smooth manifolds for each of which a disjoint union of the two manifolds bounds a compact, connected and smooth manifold $F_j$ obtained by attaching handles containing at most $1$ $1$-handle to $(F_{j,1} \sqcup F_{j,2}) \times (\{1\} \subset [0,1])$. Let $X$ be an $n$-dimensional connected and compact manifold we can smoothly immerse into ${\mathbb{R}}^n$. Then we have a simple fold map $f$ on an $m$-dimensional closed and connected manifold $M$ into ${\mathbb{R}}^n$ satisfying the following two.
\begin{enumerate}
\item The Reeb space $W_f$ is a branched manifold of the class A simple homotopy equivalent to a space obtained by a finite iteration of taking a bouquet starting from $l+1$ polyhedra where the $j$-th polyhedron collapses to $X_j$ for $1 \leq j \leq l$ and is $X$ for $j=l+1$.
\item There exist connected components of the preimage of a regular value diffeomorphic to $F_{j,1}$ and $F_{j,2}$ for each $j$.
\end{enumerate} 
\end{Thm}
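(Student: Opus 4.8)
The plan is to obtain $f$ from a special generic map whose Reeb space is $X$ by grafting, at $l$ disjoint discs of its regular value set, fold maps lying over immersed copies of $X_1,\dots,X_l$ each with a disc removed, every grafting being a (generalized) bubbling operation whose ``bubble'' carries the prescribed fibres. First, since $X$ is a compact connected $n$-manifold immersed in ${\mathbb{R}}^n$ its boundary is non-empty, so by the standard correspondence between immersions of compact $n$-manifolds into ${\mathbb{R}}^n$ and special generic maps (Proposition \ref{prop:2} and its references, e.g.\ \cite{saeki2}) there is, for the given $m>n$, a special generic map $f_0\colon M_0\to{\mathbb{R}}^n$ on a closed connected $m$-manifold with $W_{f_0}$ diffeomorphic to $X$, with $\bar{f_0}$ the given immersion, and with each regular fibre over an interior point a copy of $S^{m-n}$. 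Its Reeb space is already a branched manifold of the class A, the only non-manifold stratum being $\partial X$, with product regular neighbourhood $\partial X\times[0,1]$. If $l=0$ we take $f=f_0$; so from now on $l\ge1$.

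For each $j$ I would construct a local model and graft it in. Stable parallelizability of the closed manifold $X_j$ makes $X_j^{\circ}:=X_j\setminus\operatorname{int}D^n$ parallelizable, hence by Hirsch's theorem it immerses into ${\mathbb{R}}^n$, and after rescaling into an arbitrarily small ball, with the immersion chosen standard near $\partial X_j^{\circ}=S^{n-1}$. Using that $F_{j,1}\sqcup F_{j,2}$ bounds a compact connected $F_j$ obtained from $(F_{j,1}\sqcup F_{j,2})\times\{1\}$ by attaching handles containing at most one $1$-handle, one builds over this immersed $X_j^{\circ}$ a simple fold map that is the projection of a trivial $S^{m-n}$-bundle near the boundary and whose fold singular sets are concentric spheres realising those handle attachments --- the single $1$-handle by an index-$1$ fold, each $p$-handle with $p\ge 2$ by an index-$\ge 2$ fold, each $0$-handle by an index-$0$ fold --- so that copies of $F_{j,1}$ and $F_{j,2}$ occur as regular fibres. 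Then I would choose $l$ mutually disjoint small discs $D_1^n,\dots,D_l^n$ in the interior of the regular value set of $f_0$, each with $q_{f_0}^{-1}(D_j^n)$ a trivial $S^{m-n}$-bundle, delete the interior of each $q_{f_0}^{-1}(D_j^n)$, and glue in a rescaled copy of the $j$-th model along the $S^{n-1}$-bundle over $\partial D_j^n$, the bubble being arranged so that the new sheet of the Reeb space over $D_j^n$, together with $D_j^n$ itself, is a copy of $X_j$. This is exactly a sequence of $l$ bubbling operations and produces a simple fold map $f\colon M\to{\mathbb{R}}^n$ on a closed connected $m$-manifold with regular fibres diffeomorphic to $F_{j,1}$ and $F_{j,2}$ for each $j$, which is assertion (2).

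It remains to read off $W_f$. Along each $\partial D_j^n$ three sheets now meet --- $W_{f_0}\setminus\operatorname{int}D_j^n$, the disc $D_j^n$, and the grafted $X_j^{\circ}$ --- in the $K$-bundle local model of Proposition \ref{prop:6}(2a); inside the grafted sheet the index-$\ge 2$ folds contribute only interior codimension-one submanifolds of the manifold part and the index-$1$ fold another $K$-bundle locus, so $W_f$ is a branched manifold of the class A. For the homotopy type, $D_j^n\cup_{\partial D_j^n}X_j^{\circ}$ is diffeomorphic to $X_j$, so the $j$-th grafted piece, being $X_j$ with a free collar of $\partial D_j^n$ attached, collapses to $X_j$; and since $\partial D_j^n$ bounds the collapsible disc $D_j^n$ inside both $X$ and $X_j$, collapsing these discs exhibits $W_f$ as simple homotopy equivalent to the bouquet of $X,X_1,\dots,X_l$, hence to a space obtained by a finite iteration of bouquets of $l+1$ polyhedra, the $j$-th collapsing to $X_j$ for $1\le j\le l$ and the last being $X$. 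This is assertion (1).

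The substantive step is the construction of the local models: realising a \emph{prescribed} pair of regular fibres $F_{j,1},F_{j,2}$ by a bubble that keeps $f$ a \emph{simple} fold map and keeps every new branch locus in the class A rather than merely in the class B or outside the branched-manifold framework altogether. This is where the hypothesis ``at most one $1$-handle'' is needed: an index-$1$ fold is precisely the move that changes the number of connected components of a fibre, so it is what produces a $K$-type branch and what joins $F_{j,1}$ to $F_{j,2}$, whereas admitting two such folds would in general either make the singular configuration over the grafted sheet non-simple or create a locus not carrying the $S_2$-structure of Definition \ref{def:8}. I would carry this step out along the lines of the generalized bubbling operations of \cite{kitazawa14} and \cite{kitazawa15}, after which the remaining bookkeeping --- the homotopy computation and the verification of the local fold models --- is routine.
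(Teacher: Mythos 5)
Your outer frame (start from a special generic map with Reeb space $X$, graft an $X_j$-shaped sheet for each $j$, then do simple-homotopy bookkeeping) is close in spirit to what is wanted, but the step you yourself call ``the substantive step'' is exactly where the argument breaks, and it is not routine. At a branch locus of a simple fold map the local index-$1$ model forces the fibre on the one-sheet side to be obtained from the two fibres on the two-sheet side by attaching a single $1$-handle. Hence at the locus your bubbling creates over $\partial D_j^n$, where the one-sheet side carries the original fibre $S^{m-n}$, the two fibres just inside are forced to be homotopy spheres; the prescribed $F_{j,1}$ and $F_{j,2}$ cannot appear there. If instead the splitting into $F_{j,1}$ and $F_{j,2}$ happens at an interior index-$1$ fold of the grafted region (your class-A check tacitly admits such a locus: ``the index-$1$ fold another $K$-bundle locus''), then beyond that locus the preimage has two connected components, so the Reeb space has two sheets there and the grafted piece is not ``$D_j^n\cup_{\partial D_j^n}X_j^{\circ}$ plus a free collar''; your identification of $W_f$ and of its simple homotopy type does not account for these extra sheets. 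Worse, to make the two components evolve, through $\pi_0$-preserving folds only, into the prescribed $F_{j,1}$ and $F_{j,2}$ over the two inner sheets, you would need separate cobordisms (from $S^{m-n}$, or from the connected intermediate level of $F_j$, to each $F_{j,i}$) with controlled handle indices; the hypothesis provides only the single joint null-cobordism $F_j$ with at most one $1$-handle, and such a ``two-armed'' splitting of $F_j$ is genuinely stronger than what is assumed. Your heuristic for why ``at most one $1$-handle'' is needed (simplicity, or the $S_2$-structure of Definition \ref{def:8}) also misses the actual role of that hypothesis: it is what allows the two prescribed closed manifolds to be the two fibre components adjacent to a single branch locus.

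The paper's proof avoids all of this by using stable parallelizability in a different place: by \cite{eliashberg}, the closed manifold $X_j$ itself admits an equidimensional special generic (fold) map into ${\mathbb{R}}^n$ whose singular set has trivial normal bundle and separates $X_j$ into exactly two pieces. The prescribed fibres $F_{j,1}$ and $F_{j,2}$ are then carried as constant fibres of trivial bundles over those two pieces, while the whole cobordism $F_j$, equipped with a Morse function whose Reeb space is the tripod $K$, is deployed as a product over a collar of the singular set; this produces a simple fold map whose Reeb space is $X_j$ with a single collapsible flap attached (generalizing \cite{kitazawa0.4}), and the theorem follows by taking connected sums of these $l$ maps with a special generic map whose Reeb space is $X$ (\cite{saeki2}). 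Your replacement of this by a Hirsch immersion of the punctured $X_j$ plus a bubbling operation discards precisely the two-piece decomposition of $X_j$ that houses the prescribed fibres, which is why the local model you defer to \cite{kitazawa14}, \cite{kitazawa15} cannot be filled in as stated. The unproblematic parts of your write-up (existence of $f_0$ with $W_{f_0}=X$, the case $l=0$, and the bouquet computation once a correct local model with a single collapsible extra sheet is available) are fine, but the central construction needs to be redone along the lines above.
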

\begin{proof}
Each $X_i$ admits a stable fold map into ${\mathbb{R}}^n$ by virtue of the theory of \cite{eliashberg} and as a result it is special generic such that the singular set is a closed submanifold of dimension $n-1$ with no boundary whose normal bundle is trivial and whose complementary set is not connected and consists of exactly two connected components: see also \cite{yamamoto} for example as an explicit study on \cite{eliashberg}, considering the theory for cases of $3$-dimensional closed and orientable manifolds.
For the singular set of each special generic map, take a small closed tubular neighborhood $NS(X_i)$, regarded as a trivial linear bundle over the singular set whose fiber is diffeomorphic to $D^1$. We construct a Morse function on $F_i$ satisfying the following two.
\begin{enumerate}
\item The preimage of the minimal value is the boundary $F_{i,1} \sqcup F_{i,2}$ and contains no singular points.
\item At distinct singular points, the singular values are distinct.
\item The Reeb space is PL homeomorphic to $K$ in Definition \ref{def:8} and so on. 
\end{enumerate} 
We have a product map of the Morse function and the identity map of the singular value set of the special generic map before. The Reeb space is PL homeomorphic to the product of $K$ and the singular set of the special generic map.

We have trivial smooth bundles over two disjoint compact and connected submanifolds of $X_i-{\rm Int} NS(X_i)$ restoring the original manifold $X_i-{\rm Int} NS(X_i)$ by the disjoint union whose fibers are diffeomorphic to $F_{i,1}$ and $F_{i,2}$, respectively. Note that the $n$-dimensional compact manifolds are smoothly immersed into ${\mathbb{R}}^n$ respecting the special generic map on $X_i$. We can glue these maps to obtain a simple fold map such that the Reeb space collapses to $X_i$ and that there exist connected components of the preimage of a regular value diffeomorphic to $F_{i,1}$ and $F_{i,2}$.
This technique generalizes a method in the proof of Theorem 1 and so on in \cite{kitazawa0.4}.

To obtain a desired map, we take a {\it connected sum} of $l$ maps obtained in this way and a special generic map on an $m$-dimensional closed and connected manifold whose Reeb space is diffeomorphic to $X$, smoothly immersed into ${\mathbb{R}}^n$. Note that such a special generic map is constructed by virtue of a fundamental argument in \cite{saeki2} or we can construct such a map easily. 
We do not review a rigorous definition of a {\it connected sum} of two or more simple (stable) fold maps or general stable (fold) maps. However, we can understand the definition easily in a natural way. This notion is fundamental in various scenes. In \cite{saeki2}, connected sums of special generic maps are fundamental and important, for example.
\end{proof}
\begin{Thm}
\label{thm:4}
Let $l \geq 0$ and $m>n \geq 1$ be integers.
Let $\{(F_{j,1},F_{j,2})\}_{j=1}^l$ be a family of pairs of {\rm (}$m-n${\rm )}-dimensional closed, connected and smooth manifolds for each of which a disjoint union of the two manifolds bounds a compact, connected and smooth manifold $F_j$ obtained by attaching handles containing at most $1$ $1$-handle to $(F_{j,1} \sqcup F_{j,2}) \times (\{0\} \subset [0,1])$. Let $X$ be an $n$-dimensional connected and compact manifold we can smoothly immerse into ${\mathbb{R}}^n$. Let $\{Y_j \subset X\}_{j=1}^l$ be a family of mutually disjoint $n$-dimensional compact, connected and smooth submanifolds of $X$. Then we have a simple fold map $f$ on an $m$-dimensional closed and connected manifold $M$ into ${\mathbb{R}}^n$ such that the Reeb space $W_f$ is a branched manifold of the class A obtained by attaching $l$ manifolds each of which is diffeomorphic to a double $DY_j$ of $Y_j$ identifying $Y_j$ in $X$ and $Y_j \subset DY_j$ canonically and that there exist connected components of the preimage of a regular value diffeomorphic to $F_{j,1}$ and $F_{j,2}$ for each $j$.
\end{Thm}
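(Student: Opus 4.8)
The plan is to follow the proof of Theorem \ref{thm:3}, replacing the operation ``take a connected sum with a stand-alone gadget built from $X_j$'' by a local, bubbling-type replacement of a fixed model map over the codimension-$0$ submanifold $Y_j\subset X$, and replacing the cobordism $F_j$ by the once-punctured cobordism $F_j^\circ:=F_j\setminus\mathrm{Int}\,D^{m-n+1}$ (remove an open $(m-n+1)$-disk from the interior), which is now a cobordism from $F_{j,1}\sqcup F_{j,2}$ to $S^{m-n}$.

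First I would realize $X$ as a Reeb space. Since $X$ is a compact $n$-manifold which immerses into $\mathbb{R}^n$ (hence has non-empty boundary) and the normal bundle of the immersion is trivial (it is $0$-dimensional), the fundamental construction of \cite{saeki2} gives a special generic map $f_0:M_0\to\mathbb{R}^n$ on a closed connected $m$-manifold $M_0$ with $W_{f_0}=X$, with $\bar f_0$ the given immersion, with the preimage of each interior point a copy of $S^{m-n}$, and with the preimage sphere bundle trivial over a neighborhood of each $Y_j$ (arranged by shrinking the mutually disjoint $Y_j$ and pushing them into $\mathrm{Int}\,X$). Next, for each $j$ I would build from the handle decomposition in the hypothesis a Morse function $h_j:F_j^\circ\to[0,1]$ with $h_j^{-1}(0)=F_{j,1}\sqcup F_{j,2}$ and $h_j^{-1}(1)=S^{m-n}$ free of critical points, with pairwise distinct critical values, with exactly one critical point at which the two components of the lower level set merge into one (this is where the hypothesis ``containing at most $1$ $1$-handle'' is used), and with no critical point increasing the number of components; after the standard identification of folds of index $i$ and $m-n+1-i$, the Reeb space of $h_j$ is then PL homeomorphic to $K$ of Definition \ref{def:8}, the two ends of the exceptional $1$-handle producing the two ``short'' legs with end-fibers $F_{j,1}$ and $F_{j,2}$ and the $S^{m-n}$-end producing the ``long'' leg.

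Then I would carry out, for each $j$, the surgery over $Y_j$. Choose a slight enlargement $U_j\subset\mathrm{Int}\,X$ of $Y_j$ with $U_j\setminus\mathrm{Int}\,Y_j\cong\partial Y_j\times[0,1]$, remove $f_0^{-1}(\mathrm{Int}\,U_j)$ from $M_0$ (the new boundary component being $\partial U_j\times S^{m-n}$), and glue back the total space of the fold map $h_j\times\mathrm{id}_{\partial Y_j}$ --- which, over the branch $\partial Y_j$, is precisely the ``$K$-bundle'' local model of Proposition \ref{prop:6} with structure group inside $S_2$ because the long leg is distinguished --- with its $S^{m-n}$-leg extended by a collar $\partial Y_j\times[0,1]\times S^{m-n}$ out to $\partial U_j\times S^{m-n}$ (so that it matches $f_0$), its $F_{j,1}$-leg extended by the trivial bundle $Y_j\times F_{j,1}$ over a copy $Y_j^+$ of $Y_j$ identified with $Y_j\subset X$, and its $F_{j,2}$-leg extended by $Y_j\times F_{j,2}$ over a second copy $Y_j^-$. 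Matching collars yields a closed connected $m$-manifold $M$ and a simple fold map $f:M\to\mathbb{R}^n$ whose singular set is that of $f_0$ (over $\partial X$) together with, for each $j$, the product of $\partial Y_j$ with the critical points of $h_j$; by the distinct-critical-value condition $q_f$ is injective on it, so $f$ is simple. By construction the only branch loci of $W_f$ are the $\partial Y_j$, each with a $K$-bundle neighborhood and structure group in $S_2$, and $W_f$ is $X$ with a copy of $Y_j^-\cong Y_j$ attached along $\partial Y_j$ for each $j$; equivalently $W_f$ is the branched manifold of the class A obtained by attaching the doubles $DY_j=Y_j^+\cup_{\partial Y_j}Y_j^-$ to $X$, identifying $Y_j^+$ with $Y_j\subset X$, as asserted. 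Finally, over a generic point of $\bar f_0(\mathrm{Int}\,Y_j)$ the two sheets $Y_j^+$ and $Y_j^-$ contribute connected components of the fiber diffeomorphic to $F_{j,1}$ and $F_{j,2}$.

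The main obstacle is the construction of $h_j$ with Reeb space exactly $K$: the ``at most one $1$-handle'' hypothesis gives a single tripod vertex, but one still has to reorder and slide the remaining handles so that no critical point splits off a component or creates a further vertex --- delicate when the codimension $m-n$ is small, and already implicit in the proof of Theorem \ref{thm:3} --- and then to check that the glued map satisfies the normal forms of Definition \ref{def:2} across the matching collars, so that no unintended branching appears in $W_f$. The remaining points --- triviality of the various bundles, collar bookkeeping, and connectedness of $M$ --- are routine.
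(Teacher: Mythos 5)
Your proposal is essentially the paper's own proof: realize $X$ as the Reeb space of a special generic map with trivial $S^{m-n}$-bundle structure away from $\partial X$, then over each $Y_j$ replace the map by the product of $\mathrm{id}_{\partial Y_j}$ with a Morse function on $F_j$ minus an open $(m-n+1)$-disc (Reeb space $K$, ends $F_{j,1}\sqcup F_{j,2}$ and $S^{m-n}$) together with trivial $F_{j,1}$- and $F_{j,2}$-bundles over the two copies of $Y_j$, and finally compose with the immersion. The only difference is bookkeeping (you excise the preimage of an enlargement $U_j\supset Y_j$, the paper excises only a tubular neighborhood of $\partial Y_j$ and swaps the bundle over the rest of $Y_j$ ``instead''), and the delicate point you flag --- arranging the Morse function so its Reeb space is exactly $K$ --- is likewise only asserted, not detailed, in the paper.
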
 
\begin{proof}
First we construct a special generic map whose Reeb space is diffeomorphic to $X$, smoothly immersed into ${\mathbb{R}}^n$, as in Theorem \ref{thm:3}. We can do this so that the restriction to the preimage of the complementary space of the interior of a small collar neighborhood of $\partial X$ in the Reeb space is the projection of a trivial bundle over the space such that the fiber is diffeomorphic to $S^{m-n}$ by virtue of the theory.
We take $Y_j$ and a small closed tubular neighborhood of $\partial Y_j$ in $X$. We remove the interior of the preimage of the closed tubular neighborhood for each $j$ and we attach a new map.
We construct this new map as a product map of a Morse function satisfying the following four and the identity map on a manifold diffeomorphic to $\partial Y_j$.
\begin{enumerate}
\item The preimage of the minimal value is the disjoint union of two connected components of the boundary and diffeomorphic to $F_{i,1} \sqcup F_{i,2}$ and contains no singular points.
\item The preimage of the maximal value is a connected component of the boundary and a standard sphere and contains no singular points.
\item At distinct singular points, the singular values are distinct.
\item The Reeb space is PL homeomorphic to $K$ in Definition \ref{def:8} and so on. 
\end{enumerate}  
We have a collar neighborhood of $\partial Y_j \subset Y_j$ in this situation naturally. Over the complementary space of the interior of the collar neighborhood, we construct a trivial smooth bundle whose fiber is diffeomorphic to $F_{i,1} \sqcup F_{i,2}$ instead.
By gluing these maps and composing the resulting map with the original immersion of $X$ into ${\mathbb{R}}^n$, we have a desired simple fold map.
\end{proof}
\begin{Rem}
\label{rem:1}
We can obtain infinitely many stably parallelizable closed manifolds even if we restrict the class of the manifolds. For example, we can find such manifolds in the class of $4$--$7$-dimensional closed and simply-connected manifolds, studied or systematically explained in \cite{barden}, \cite{gompgstipsicz}, \cite{kreck}, \cite{smale}, \cite{wall}, \cite{zhubr}, \cite{zhubr2}, and so on. 
Some well-known such explicit manifolds are given. In addition, for example, by considering smooth immersions or embeddings into suitable Euclidean spaces whose codimensions are greater than $2$, we can consider normal bundles and the subbundles obtained by restricting the fibers to the unit spheres. The total spaces are such closed manifolds.
Such facts imply that we can find various homology groups, cohomology rings, and so on, of the resulting Reeb spaces.
\end{Rem}
\begin{Ex}
Related to Remark \ref{rem:1}, every $4$-dimensional closed, connected and orientable manifold can be smoothly embedded into ${\mathbb{R}}^7$. We can have $6$-dimensional closed, connected and stably parallelizable manifolds in the presented way for these manifolds. We can apply Theorems \ref{thm:3} and \ref{thm:4}: for example by taking $X_j$ as such a $6$-dimensional manifold, $X$ as a unit disc, and $n=6$ in Theorem \ref{thm:3}. Some $4$-dimensional closed, connected and orientable manifolds present Reeb spaces such that the square of some cohomology class may not be divisible by $2$. Such cases have not been studied in existing related studies (by the author).   
\end{Ex}
\begin{Rem}
\label{rem:2}
Related to Main Theorems, \cite{hiratukasaeki}, \cite{hiratukasaeki2}, \cite{kitazawa2}, \cite{kitazawa3}, and so on, present results stating that existence of a connected component of the preimage of a regular value which is not null-cobordant or which is not null-cobordant in a wider sense different from original well-known senses makes the top homology group of the Reeb space non-trivial for some coefficient.
\end{Rem}
\begin{Thm}
\label{thm:5}
In Theorem \ref{thm:4}, assume that $l>0$, that $X$ is obtained by attaching $h_p$ $p$-handles for $1 \leq p \leq n-1$ starting from $l$ $0$-handles and that the following four hold.
\begin{enumerate}
\item $H_1(X;\mathbb{Z}) \cong {\mathbb{Z}}^{h_1-(l-1)}$.
\item $H_p(X;\mathbb{Z}) \cong {\mathbb{Z}}^{h_p}$ for $2 \leq p \leq n-1$.
\item $F_j$ is obtained by attaching $h_{j,p}$ $p$-handles for $1 \leq p \leq n-1$ starting from a $0$-handle.
\item $X-{\sqcup}_{j=1}^l {\rm Int} Y_j$ is obtained by attaching $h_p-{\Sigma}_{j=1}^{l} h_{j,p}$ $p$-handles to the product of a manifold diffeomorphic to the boundary and $\{0\} \subset [0,1]$ for all $1 \leq p \leq n-1$.
\end{enumerate}
Then, we have the following facts on homology groups and cohomology groups and rings.
\begin{enumerate}
\item $H_p(W_f;\mathbb{Z})$ is isomorphic to the direct sum of $H_p(X;\mathbb{Z}) \oplus {\mathbb{Z}}^{{\Sigma}_{j=1}^l h_{j,n-p}}$ for $1 \leq p \leq n-1$ and ${\mathbb{Z}}^l$ for $p=n$.
\item $H^p(W_f;\mathbb{Z})$ is isomorphic to $A_p \oplus {\oplus}_{j=1}^l (H^{p}(Y_j;\mathbb{Z})) \oplus {\oplus}_{j=1}^l (H^{p}(DY_j-{\rm Int} Y_j;\mathbb{Z}))$ where $A_p$ is isomorphic to ${\mathbb{Z}}^{h_1-{\Sigma}_{j=1}^{l} h_{j,1}-(l-1)}$ for $p=1$ and ${\mathbb{Z}}^{h_p-{\Sigma}_{j=1}^{l} h_{j,p}}$ for $2 \leq p \leq n-1$. $H^n(W_f;\mathbb{Z})$ is isomorphic to ${\mathbb{Z}}^l$.
\item $H^{\ast}(X;\mathbb{Z})$ is regarded as a subalgebra of $H^{\ast}(W_f;\mathbb{Z})$ and the cohomology group is identified with $A_p \oplus {\oplus}_{j=1}^l (H^{p}(Y_j;\mathbb{Z})) \oplus \{0\} \subset A_p \oplus {\oplus}_{j=1}^l (H^{p}(Y_j;\mathbb{Z})) \oplus {\oplus}_{j=1}^l (H^{p}(DY_j-{\rm Int} Y_j;\mathbb{Z}))$ before for each degree $1 \leq p \leq n-1$. $H^{\ast}({\sqcup} DY_j;\mathbb{Z})$ is regarded as a subalgebra of $H^{\ast}(W_f;\mathbb{Z})$ and the cohomology group is identified with $\{0\} \oplus {\oplus}_{j=1}^l (H^{p}(Y_j;\mathbb{Z})) \oplus {\oplus}_{j=1}^l (H^{p}(DY_j-{\rm Int} Y_j;\mathbb{Z})) \subset A_p \oplus {\oplus}_{j=1}^l (H^{p}(Y_j;\mathbb{Z})) \oplus {\oplus}_{j=1}^l (H^{p}(DY_j-{\rm Int} Y_j;\mathbb{Z}))$ for each degree $1\leq p \leq n-1$ and ${\oplus}_{j=1}^l (H^{n}(DY_j;\mathbb{Z})) \cong {\mathbb{Z}}^l$ for degree $p=n$.
\end{enumerate}
\end{Thm}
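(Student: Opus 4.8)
The plan is to extract everything from the description of $W_f$ furnished by Theorem \ref{thm:4} and then to run a chain of Mayer--Vietoris computations. Throughout write $N_j:=DY_j-{\rm Int}\,Y_j$, a compact connected $n$-manifold PL homeomorphic to $Y_j$ and glued to $X$ along $\partial Y_j$, and $X^{-}:=X-{\sqcup}_{j=1}^{l}{\rm Int}\,Y_j$. By Theorem \ref{thm:4}, $X$ and the doubles $DY_j$ sit in $W_f$ as subpolyhedra with $X\cap DY_j=Y_j$ and $DY_{j_1}\cap DY_{j_2}\subset X$ for $j_1\ne j_2$, so $W_f=X\cup_{\sqcup_j Y_j}(\sqcup_j DY_j)=X^{-}\cup_{\sqcup_j\partial Y_j}(\sqcup_j DY_j)$. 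First I would note that every manifold here is orientable: $X$ immerses into $\mathbb{R}^n$, hence is parallelizable, so are its codimension-$0$ submanifolds $Y_j$, $N_j$ and $X^{-}$, and the double of an orientable manifold is orientable, so each $DY_j$ is a closed orientable $n$-manifold; this is what makes Poincar\'e--Lefschetz duality available over $\mathbb{Z}$.

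The geometric core is a PL retraction $r\colon W_f\to X$ with $r|_{X}={\rm id}$ and, on each $DY_j$, equal to the canonical fold $DY_j\to Y_j$ (the identity on the copy $Y_j\subset DY_j$, and the canonical homeomorphism $N_j\to Y_j$, which restricts to the identity on $\partial Y_j$, on the other half); these agree on overlaps, and near the branch locus $\partial Y_j$, where $W_f$ is a $K$-bundle, $r$ folds the third prong onto one of the other two, so $r$ is PL. Hence $i\colon X\hookrightarrow W_f$ is split injective on homology and split surjective on cohomology, the long exact sequences of $(W_f,X)$ split, and
$$H_{p}(W_f;\mathbb{Z})\cong H_{p}(X;\mathbb{Z})\oplus H_{p}(W_f,X;\mathbb{Z}),\qquad H^{p}(W_f;\mathbb{Z})\cong H^{p}(X;\mathbb{Z})\oplus H^{p}(W_f,X;\mathbb{Z}).$$
By excision $H_{\ast}(W_f,X;\mathbb{Z})\cong\bigoplus_j H_{\ast}(DY_j,Y_j;\mathbb{Z})\cong\bigoplus_j H_{\ast}(N_j,\partial N_j;\mathbb{Z})$, and Poincar\'e--Lefschetz duality gives $H_{p}(N_j,\partial N_j;\mathbb{Z})\cong H^{n-p}(N_j;\mathbb{Z})$; the same for cohomology, and since $Y_j\hookrightarrow DY_j$ is split by the fold, the sequence of $(DY_j,Y_j)$ also splits, which together with the Mayer--Vietoris sequence of the double $DY_j=Y_j\cup_{\partial Y_j}N_j$ identifies $H^{p}(DY_j,Y_j;\mathbb{Z})$ with $H^{p}(N_j;\mathbb{Z})=H^{p}(DY_j-{\rm Int}\,Y_j;\mathbb{Z})$ in the relevant range. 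In particular $H_{n}(DY_j,Y_j;\mathbb{Z})\cong\mathbb{Z}$, and both $H_{n}(W_f;\mathbb{Z})$ and $H^{n}(W_f;\mathbb{Z})\cong\bigoplus_j H^{n}(DY_j;\mathbb{Z})$ are isomorphic to $\mathbb{Z}^{l}$.

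What remains is the handle-theoretic bookkeeping, which I expect to be the main obstacle. Assumptions (1)--(2) force the handle chain complex of $X$ to have vanishing differential in degrees $\ge2$ and the $1$-handles to survive modulo the $l-1$ relations from the $0$-handles, so $H_{p}(X;\mathbb{Z})$ is free of rank $h_p$ for $2\le p\le n-1$ and of rank $h_1-(l-1)$ for $p=1$. Feeding assumptions (3)--(4) into the Mayer--Vietoris sequence of $X=X^{-}\cup_{\sqcup_j\partial Y_j}(\sqcup_j Y_j)$ and its cohomology analogue --- matching the handle counts of $X$, of $X^{-}$ and of the $F_j$ --- shows that $H_{\ast}(Y_j;\mathbb{Z})$, $H_{\ast}(N_j;\mathbb{Z})$, $H_{\ast}(X^{-};\mathbb{Z})$ and $H_{\ast}(DY_j;\mathbb{Z})$ are torsion-free of the ranks prescribed by the $h_{j,p}$, that the restriction $H^{\ast}(X;\mathbb{Z})\to\bigoplus_j H^{\ast}(Y_j;\mathbb{Z})$ is a split surjection whose kernel $A_{p}$ has the asserted rank, and that the ranks produced by Poincar\'e--Lefschetz duality match those of $H^{p}(DY_j-{\rm Int}\,Y_j;\mathbb{Z})$. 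Substituting this into the two displayed splittings gives (1) and (2).

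For (3): $r^{\ast}$ is a ring homomorphism, so $r^{\ast}H^{\ast}(X;\mathbb{Z})$ is a subalgebra isomorphic to $H^{\ast}(X;\mathbb{Z})$, which by the previous paragraph is exactly the summand $A_{p}\oplus\bigoplus_j H^{p}(Y_j;\mathbb{Z})\oplus\{0\}$. For the subalgebra coming from $\sqcup_j DY_j$ I would use that the $DY_j$ are pairwise disjoint away from $X^{-}$ and that each $N_j$ meets $X$ only along $\partial Y_j$, so the complementary summand $\bigoplus_j H^{p}(N_j;\mathbb{Z})$ together with $r^{\ast}\big(\bigoplus_j H^{p}(Y_j;\mathbb{Z})\big)$ is closed under cup product and ring-isomorphic to $\bigoplus_j H^{\ast}(DY_j;\mathbb{Z})$; the crucial local input is that a class of $H^{p}(W_f;\mathbb{Z})$ lying in the image of $H^{p}(W_f,W_f-{\rm Int}\,N_j;\mathbb{Z})\cong H^{p}(N_j,\partial N_j;\mathbb{Z})$ is supported in ${\rm Int}\,N_j$, so its cup product with any class of $r^{\ast}(A_{p'}\oplus\{0\})$ --- which restricts to $0$ on $N_j$ since $r|_{N_j}$ factors through $Y_j$ and such a class dies on $Y_j$ --- vanishes. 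The same remark yields the vanishing of the mixed products in the abstract form of the statement. Beyond this, the only genuinely delicate point is the handle bookkeeping, where assumptions (3) and (4) are needed to kill all differentials and to reconcile the dualised ranks with the $h_{j,p}$; the rest is formal.
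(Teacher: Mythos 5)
Your overall route is essentially the paper's: the paper also works with the decomposition $W_f=X\cup_{\sqcup_j Y_j}(\sqcup_j DY_j)$, uses the handle hypotheses to force all differentials to vanish except $l-1$ cancellations of $0$- and $1$-handles, and invokes the dual handle decomposition of $N_j=DY_j-\mathrm{Int}\,Y_j$ (with $h_{j,p}$ handles of index $n-p$); where the paper runs a Mayer--Vietoris sequence and asserts that $H_p(\sqcup_j Y_j;\mathbb{Z})\to H_p(X;\mathbb{Z})\oplus H_p(\sqcup_j DY_j;\mathbb{Z})$ is a monomorphism, you instead split the pair $(W_f,X)$ by the fold retraction $r$ and use excision plus Lefschetz duality. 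Your retraction is a genuine improvement: it makes the splittings canonical and gives an actual argument for the subalgebra statements in (3), which the paper's proof leaves implicit. One reading issue you share with the paper: hypothesis (3) as literally written concerns $F_j$, whose interior topology the Reeb space cannot detect, so nothing about $H_\ast(Y_j;\mathbb{Z})$ follows from it; like the paper's proof (``the duals to original handles for $Y_j$''), you must and implicitly do read it as a handle decomposition of $Y_j$ with $h_{j,p}$ $p$-handles, even though you cite ``the handle counts \dots of the $F_j$''. Make that reading explicit.

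The genuine gap is your sentence claiming that ``the ranks produced by Poincar\'e--Lefschetz duality match those of $H^{p}(DY_j-\mathrm{Int}\,Y_j;\mathbb{Z})$.'' Your own machinery identifies the complementary summand of $H^p(W_f;\mathbb{Z})$ with $\bigoplus_j H^p(DY_j,Y_j;\mathbb{Z})\cong\bigoplus_j H^p(N_j,\partial N_j;\mathbb{Z})\cong\bigoplus_j H_{n-p}(N_j;\mathbb{Z})\cong\mathbb{Z}^{\sum_j h_{j,n-p}}$, whereas the summand named in item (2) is the absolute group $\bigoplus_j H^p(N_j;\mathbb{Z})\cong\mathbb{Z}^{\sum_j h_{j,p}}$; these agree only if $h_{j,p}=h_{j,n-p}$, a symmetry the hypotheses do not provide for manifolds with boundary. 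Concretely, take $n=3$, $l=1$, $X=S^1\times D^2$ (one $0$-handle, one $1$-handle), $Y_1$ a concentric solid torus, so $X-\mathrm{Int}\,Y_1\cong T^2\times[0,1]$ and all hypotheses hold with $h_{1,1}=1$, $h_{1,2}=0$: then $W_f$ collapses onto $DY_1=S^1\times S^2$, so $H^1(W_f;\mathbb{Z})\cong\mathbb{Z}$ (consistent with item (1)), while $A_1\oplus H^1(Y_1;\mathbb{Z})\oplus H^1(DY_1-\mathrm{Int}\,Y_1;\mathbb{Z})\cong\mathbb{Z}^2$. So the matching you assert cannot be proved; what your (otherwise sound) argument establishes is item (2)--(3) with the third summand replaced by the relative groups $H^p(DY_j,Y_j;\mathbb{Z})$, of rank $h_{j,n-p}$. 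The discrepancy traces back to the statement itself (and the paper's equally terse proof does not address it), but your proof as written asserts the reconciliation as if it followed from hypotheses (3)--(4), and that step would fail.
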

\begin{proof}
We consider a Mayer-Vietoris sequence $$\rightarrow H_p({\sqcup}_{j=1}^l Y_j;\mathbb{Z}) \rightarrow H_p(X;\mathbb{Z}) \oplus H_p({\sqcup}_{j=1}^l DY_j;\mathbb{Z}) \rightarrow H_p(W_f;\mathbb{Z}) \rightarrow$$ and the
 homomorphism from $H_p({\sqcup}_{j=1}^l Y_j;\mathbb{Z})$ into $H_p(X;\mathbb{Z}) \oplus H_p({\sqcup}_{j=1}^l DY_j;\mathbb{Z})$ is a monomorphism. Furthermore, each summand is induced by the natural inclusion and a monomorphism.
 Assumptions on handles and integral homology groups, which are free, imply that in the argument we cannot cancel pairs of handles except $l-1$ pairs of $0$-handles and $1$-handles.
 $DY_j$ is represented as a double of $Y_j$. $D_{Y_j}-{\rm Int} Y_j$ is, by considering the duals to original handles for $Y_j$, regarded as a manifold obtained by attaching $h_{j,p}$ ($n-p$)-handles to the product of a manifold diffeomorphic to the boundary and $\{0\} \subset [0,1]$ for $1 \leq p \leq n-1$. These arguments on the topological structures of the manifolds complete the proof of all facts. 
\end{proof}
\begin{Ex}
\cite{kitazawa0.1}, \cite{kitazawa0.2}, \cite{kitazawa0.5}, and so on, present explicit examples implicitly or explicitly for Theorem \ref{thm:5} where $X$ and $Y_j$ are $n$-dimensional standard discs. 
We can know that manifolds represented as connected sums of total spaces of smooth bundles over the $n$-dimensional standard sphere whose fibers are ($m-n$)-dimensional standard spheres admit such maps with $m>n \geq 1$. \cite{kitazawa}, \cite{kitazawa7}, and so on, present explicitly or implicitly more general examples. 
\end{Ex}
\section{Acknowledgment with additional remarks related to applications of our studies on geometry of manifolds to machine-learnings and related topics and data availability.}
The author is a member of and supported by JSPS KAKENHI Grant Number JP17H06128 "Innovative research of geometric topology and singularities of differentiable mappings" (Principal Investigator: Osamu Saeki). 

This is also closely related to a joint research project at Institute of Mathematics for Industry, Kyushu University (20200027), ''Geometric and constructive studies of higher dimensional manifolds and applications to higher dimensional data'', principal investigator of which is the author. The author would like to thank people supporting the research project. 
This is a project on applications of mathematical (, especially, geometric) theory on higher dimensional differentiable manifolds developed through the studies of the author to data analysis, visualizations, and so on. 
This is a kind of new project of applying the singularity theory of differentiable maps and (differential) topology to machine-learnings and related problems such as multi-optimization problems, genetic algorithms, evolutionary computations, and so on. \cite{hamadahayanoichikikabatateramoto}, \cite{ichikihamada}, \cite{ichikihamada2}, \cite{sakuraisaekicarrwuyamamotoduketakahashi}, and so on, are closely related studies and the author has been interested in this field. 
For example, Naoki Hamada is an expert of multi-optimization problems together with related topics and kindly proposed a strategy of studying multi-functions for the problems via topological theory of their Reeb spaces. 
More precisely, as one interesting application, he proposed an idea that complexity of the topology of the Reeb space can measure how difficult and complicated a multi-optimization problem is.
This together with backgrounds on geometry has motivated the author to study global topologies of Reeb spaces of smooth maps of important classes further.

The author declares that all data supporting the present study are in the present paper.

\end{document}